\newtheorem{Theorem}{Theorem}[section]
\newtheorem{Proposition}[Theorem]{Proposition}
\newtheorem{Lemma}[Theorem]{Lemma}
\newtheorem{Corollary}[Theorem]{Corollary}
\theoremstyle{definition}
\newtheorem{Definition}[Theorem]{Definition}
\newtheorem{Remark}[Theorem]{Remark}
\newcommand{\bTheorem}[1]{
\begin{Theorem} \label{T#1} }
\newcommand{\eT}{\end{Theorem}}
\newcommand{\bProposition}[1]{
\begin{Proposition} \label{P#1}}
\newcommand{\eP}{\end{Proposition}}
\newcommand{\bLemma}[1]{
\begin{Lemma} \label{L#1} }
\newcommand{\eL}{\end{Lemma}}
\newcommand{\bCorollary}[1]{
\begin{Corollary} \label{C#1} }
\newcommand{\eC}{\end{Corollary}}
\newcommand{\bRemark}[1]{
\begin{Remark} \label{R#1} }
\newcommand{\eR}{\end{Remark}}
\newcommand{\dy}{{\rm d}y}
\newcommand{\bDefinition}[1]{
\begin{Definition} \label{D#1} }
\newcommand{\eD}{\end{Definition}}
\newcommand{\Td}{\mathbb{T}^d}
\newcommand{\intTd}[1]{ \int_{\Td} #1 \ \D x}
\newcommand{\tvm}{\tilde{\vc{m}}}
\newcommand{\bfphi}{\boldsymbol{\varphi}}
\newcommand{\bFormula}[1]{
\begin{equation} \label{#1}}
\newcommand{\eF}{\end{equation}}
\newcommand{\Ov}[1]{\overline{#1}}
\newcommand{\aleq}{\stackrel{<}{\sim}}
\newcommand{\toS}{\stackrel{(S)}{\to}}
\newcommand{\eqS}{\stackrel{(S)}{\approx}}
\newcommand{\vr}{\varrho}
\newcommand{\tvr}{\tilde \vr}
\newcommand{\vm}{\vc{m}}
\newcommand{\vc}[1]{{\bf #1}}
\newcommand{\Div}{{\rm div}_x}
\newcommand{\Grad}{\nabla_x}
\newcommand{\dx}{\,{\rm d} {x}}
\newcommand{\dt}{\,{\rm d} t }
\newcommand{\dxdt}{\dx  \dt}
\newcommand{\D}{{\rm d}}
\newcommand{\ep}{\varepsilon}
\def\softd{{\leavevmode\setbox1=\hbox{d}%
          \hbox to 1.05\wd1{d\kern-0.4ex{\char039}\hss}}}
\definecolor{Cgrey}{rgb}{0.85,0.85,0.85}
\definecolor{Cblue}{rgb}{0.50,0.85,0.85}
\definecolor{Cred}{rgb}{1,0,0}
\definecolor{fancy}{rgb}{0.10,0.85,0.10}
\newcommand\Cbox[2]{%
    \newbox\contentbox%
    \newbox\bkgdbox%
    \setbox\contentbox\hbox to \hsize{%
        \vtop{
            \kern\columnsep
            \hbox to \hsize{%
                \kern\columnsep%
                \advance\hsize by -2\columnsep%
                \setlength{\textwidth}{\hsize}%
                \vbox{
                    \parskip=\baselineskip
                    \parindent=0bp
                    #2
                }%
                \kern\columnsep%
            }%
            \kern\columnsep%
        }%
    }%
    \setbox\bkgdbox\vbox{
        \color{#1}
        \hrule width  \wd\contentbox %
               height \ht\contentbox %
               depth  \dp\contentbox
        \color{black}
    }%
    \wd\bkgdbox=0bp%
    \vbox{\hbox to \hsize{\box\bkgdbox\box\contentbox}}%
    \vskip\baselineskip%
}
\date{}
\begin{document}


\title{(S)--convergence and approximation of oscillatory solutions in fluid dynamics}

\author{Eduard Feireisl
\thanks{The work of E.F. was partially supported by the
Czech Sciences Foundation (GA\v CR), Grant Agreement
18--05974S. The Institute of Mathematics of the Academy of Sciences of
the Czech Republic is supported by RVO:67985840.} 
}


\maketitle

\centerline{Institute of Mathematics of the Academy of Sciences of the Czech Republic;}
\centerline{\v Zitn\' a 25, CZ-115 67 Praha 1, Czech Republic}

\centerline{Institute of Mathematics, Technische Universit\"{a}t Berlin,}
\centerline{Stra{\ss}e des 17. Juni 136, 10623 Berlin, Germany}
\centerline{feireisl@math.cas.cz}

\begin{abstract}

We propose a new concept of (S)--convergence applicable to 
numerical methods as well as other consistent approximations of  
the Euler system in gas dynamics. 
(S)--convergence, based on averaging in the spirit of Strong Law of Large Numbers, reflects the asymptotic properties of a given approximate sequence better than the standard description via Young measures. Similarity with the tools of ergodic theory is discussed.

\end{abstract}

{\bf Keywords:} Statistical convergence, Euler system, consistent approximation, ergodic theory 

{\bf MSC:} 
\bigskip

\section{Introduction}
\label{i}

As illustrated by numerous recent results, the Euler system describing the motion of an inviscid fluid in the framework of continuum mechanics is (mathematically) ill--posed. This rather pessimistic conclusion applies to models of both incompressible and compressible fluids, see Bressan, Murray \cite{BresMur}, Buckmaster, Vicol \cite{BucVic}, Chiodaroli \cite{Chiod}, DeLellis, Sz\' ekelyhidi \cite{DelSze3}, Wiedemann \cite{Wied}, among many others. 
Accordingly, solutions of the Euler system should be perceived as limits of physically grounded and mathematically well posed approximations, among which various types of \emph{zero dissipation limits}. A prominent example are approximations by \emph{numerical schemes}, where the numerical viscosity mimics the physical viscosity present in \emph{real} fluids.  

To study the properties of approximate solutions, we introduce the concept of \emph{consistent approximation} borrowed from numerics, where the underlying equations are satisfied modulo small consistency errors vanishing in the asymptotic limit. We focus on the case where the approximate      
solutions are only bounded, meaning they may exhibit oscillations (called wiggles in numerics) and/or concentrations. In view of the results shown in \cite{BreFeiHof19}, \cite{MarEd}, oscillations and/or concentration appear if, simultaneously, 
 \begin{itemize}
\item the limit Euler system does not admit a regular solution -- a consequence of the general 
weak--strong uniqueness principle stated e.g. in Gwiazda et al. \cite{GSWW}; 
\item all accumulation points of the approximate sequence are not smooth, see \cite{BreFeiHof19}; 
\item accumulation points of the approximate sequence are not (weak) solutions of the limit system as otherwise 
the convergence would be strong, see \cite{MarEd}. 
\end{itemize}
Anticipating the above scenario we recall the work of DiPerna and Majda \cite{DiPMaj87a}, \cite{DiP2} on measure--valued solutions, where
suitable solutions of the Euler system are identified with the Young measure generated by an approximating sequence. More recently, there has been a series of attempts to compute and visualize the measure--valued solutions to the Euler system, see Fjordholm et al. 
\cite{FjKaMiTa}, \cite{FjMiTa1} or \cite{FeiLuk}. 

The main aim of the present paper is to propose a different strategy based on statistical averaging, inspired by the concept 
of $\mathcal{K}$--convergence, cf. Balder \cite{Bald}, Koml\'{o}s \cite{Kom}. To illuminate the main idea, consider a simple sequence of functions:
\[
U_n = 1 \ \mbox{if}\ n \ \mbox{is odd},\ U_n = 0 \ \mbox{if}\ n \ \mbox{is even.}
\]
Of course, more sophisticated and physically relevant examples can be produced. Consider the asymptotic limit 
of $\{ U_n \}_{n=1}^\infty$ for $n \to \infty$. A naive but natural solution would be that the ``limit'' is a convex combination of 
Dirac masses, specifically, 
\begin{equation} \label{i1}
U_n \to \frac{1}{2} \delta_{1} + \frac{1}{2} \delta_{0}, 
\end{equation}
where $\delta_X$ denotes the Dirac mass at $X$. However, the standard approach based on the theory of Young measures provides a different 
conclusion: $U_{n_k} \to 1$ for a suitable subsequence ($n_k$ odd), or $U_{n_l} \to 0$ for another suitable subsequence
($n_l$ even). Choosing 
a ``suitable'' subsequence we get either 
\[
U_{n_k} \to 1 \ \mbox{as}\ k \to \infty, 
\]
or 
\[
U_{n_l} \to 0  \ \mbox{as}\ l \to \infty.
\]
Intuitively, the limit \eqref{i1} reflects better the asymptotic properties of $\{ U_n \}_{n=1}^\infty$, however, it is never seen if the standard Young measure based theory is applied. The goal of this paper is to introduce the concept of statistical limit that will identify \eqref{i1} as the only eligible option.

We claim that convergence ``up to a subsequence'', that may be useful in theoretical studies, is of no practical use in numerical experiments, where unconditional convergence is implicitly assumed. What is more, the weak convergence, meaning convergence in 
the sense of integral averages, is difficult to visualize as well. The same applies to Young measures that are necessarily objects 
resulting from a weak limit.

Our goal is to introduce a concept of statistical (S)--convergence of approximate solutions, where the limit is identified with 
a parametrized measure representing a generalized solution of the target problem. The convergence is strong (a.a) with respect to the physical space and in the Wasserstein metric on the space of probability measures. In particular, all observable quantities like the barycenter, deviation, variance as well as higher order moments of the limit measure can be identified as limits of \emph{strongly} converging sequences. Moreover, 
(S)--convergence is robust with respect to \emph{statistical} perturbations of the approximate sequence.

A sufficient but definitely not necessary condition for a sequence to be (S)--convergent is its \emph{asymptotic stationarity}. This means, very roughly indeed, that statistical distribution of observable quantities features certain ergodicity. In particular, for 
\emph{strongly} (S)--convergent sequences of measurable functions $\{ \vc{U}_n \}_{n=1}^\infty$, the limit of ergodic averages 
\[
\frac{1}{w_N} \sum_{n=1}^N w \left( \frac{n}{N} \right) b (\vc{U}_n) \to \Ov{b(\vc{U})} \ \mbox{in}\ L^1 
\ \mbox{as}\ N \to \infty,\ w_N \equiv \sum_{n=1}^N w \left( \frac{n}{N} \right),  
\]
exists for any bounded continuous function $b$ and any weight function $w \in C^1[0,1]$, $w \geq 0$, $\int_0^1 w(z)\ \D z = 1$. 
The mapping $\mathcal{V}: b \mapsto \Ov{b(\vc{U})}$ can be identified with a (parametrized) measure -- the (S)--limit of $\{ \vc{U}_n \}_{n=1}^\infty$. 

Our working plan is as follows:

\begin{itemize}

\item In Section \ref{S}, we introduce the concept of {\bf (S)--convergence} and discuss its basic properties. 

\item In Section \ref{Q} we show that (S)--convergence is {\bf stable with respect to statistical perturbations}. 

\item

Furthermore, 
we discuss several examples of (S)--converging sequences, in particular perturbations of {\bf stationary approximations} in 
Section \ref{A}. We also establish several sufficient conditions for a sequence to be (S)--convergent in terms of asymptotic 
stationarity.

\item Finally, in Section \ref{E}, we apply the abstract theory to approximate sequences of the {\bf isentropic Euler system}.

\end{itemize}

\section{Statistical limit, (S)--convergence}
\label{S}

Let $Q \subset R^M$ denote the \emph{physical space}. In the context of fluid mechanics, we usually consider 
\[
Q = \left\{ (t,x) \ \Big| \ t \in (0,T) ,\ x \in \Omega \right\},   
\]
where $t$ denotes the time, and $x \in \Omega \subset R^d$, $d=1,2,3$, is the space coordinate confined to a physical domain $\Omega$ 
occupied by the fluid. The state of the system will be 
denoted by $\vc{U} = \vc{U}(t,x)$. For the Euler system, the vector $\vc{U}$ contains the basic state variables, for instance 
$\vc{U} = [\vr, \vm, S]$, where $\vr$ is the mass density, $\vm$ is the momentum, and $S$ is the total entropy. Accordingly, 
$\{ \vc{U}_n \}_{n=1}^\infty$ denotes a sequence of approximate solutions.

We introduce the basic concept of (S)--convergence inspired by the theory of stationary stochastic processes, see e.g. Krylov 
\cite[Chapter 4]{Krylovstoch}, and the approach developed by Das and Yorke \cite{DaYo}.  

\begin{Definition} [(S)--convergence] \label{SD1}

Let $\vc{U}_n: Q \to R^D$, $n=1,2,\dots$ be a sequence of measurable functions. 

\noindent
{\bf (i)} 
We say that $\{ \vc{U}_n \}_{n=1}^\infty$ 
is \emph{weakly (S)--convergent} if for any $b \in C_c(R^D)$ there holds: 
\begin{itemize}
\item {\bf Weak correlation limit} 
\begin{equation} \label{S1}
\lim_{N \to \infty} \frac{1}{N} \sum_{n=1}^N \int_Q b(\vc{U}_n) b(\vc{U}_m) \ \D y \ \mbox{exists}
\end{equation}
for any fixed $m$; 
\item {\bf Weak correlation disintegration} 
\begin{equation} \label{S2}
\lim_{N \to \infty} \sum_{n,m=1}^N \frac{1}{N^2} \int_Q b(\vc{U}_n) b(\vc{U}_m) \ \D y =
\lim_{M \to \infty} \frac{1}{M} \sum_{m=1}^M \left[ \lim_{N \to \infty} \frac{1}{N} \sum_{n=1}^N \int_Q b(\vc{U}_n) 
b(\vc{U}_m) \ \D y \right] .
\end{equation}

\end{itemize}

\medskip 

\noindent
{\bf (ii)} 
We say that $\{ \vc{U}_n \}_{n=1}^\infty$ is \emph{strongly (S)--convergent} if for any $b \in C_c(R^D)$ and any 
\[
w \in 
W = \left\{ w \in C^1[0,1] \ \Big| \ w \geq 0, \ \int_0^1 w(z) \ \D z = 1 \right\}
\]
there holds:
\begin{itemize}
\item {\bf Strong correlation limit} 
\begin{equation} \label{Su1}
\lim_{N \to \infty} \frac{1}{w_N} \sum_{n=1}^N \int_Q w \left( \frac{n}{N} \right) b(\vc{U}_n) b(\vc{U}_m) \ \D y,\ 
w_N \equiv \sum_{n=1}^N w \left( \frac{n}{N} \right),
 \ \mbox{exists}
\end{equation}
for any fixed $m$ and is independent of $w$;
\item {\bf Strong correlation disintegration} 
\begin{equation} \label{Su2}
\begin{split}
\lim_{N \to \infty} &\sum_{n,m=1}^N \frac{1}{w_N^2} w \left( \frac{n}{N} \right) 
w \left( \frac{m}{N} \right) \int_Q b(\vc{U}_n) b(\vc{U}_m) \ \D y \\ &=
\lim_{M \to \infty} \frac{1}{w_M} \sum_{m=1}^M w \left( \frac{m}{M} \right)\left[ \lim_{N \to \infty} \frac{1}{w_N} \sum_{n=1}^N w \left( \frac{n}{N} \right) \int_Q b(\vc{U}_n) 
b(\vc{U}_m) \ \D y \right] 
\end{split}
\end{equation}
for any $w \in W$.

\end{itemize}

\end{Definition}

Obviously \emph{ strong $\Rightarrow$ weak } as the choice $w \equiv 1$ yields the desired conclusion. 
Conditions \eqref{S1}, \eqref{S2} are rather mild, and, as we shall see below, allow for a large class approximate sequences that comply 
with a generalized form of Strong Law of Large Numbers: 
\[
\lim_{N \to \infty} \frac{1}{N} \sum_{n=1}^N b(\vc{U}_n) \ \mbox{exists for any}\ b \in C_c(R^D),
\]
see Theorem \ref{SL2} below. Note that \eqref{S1} and even \eqref{Su1} hold as soon as the sequence $\{ \vc{U}_n \}_{n=1}^\infty$ generates a Young measure, 
specifically, 
\[
b(\vc{U}_n) \to \Ov{b(\vc{U})} \ \mbox{as}\ n \to \infty \ \mbox{for any}\ b \in C_c(R^D), 
\]
The opposite implication in general fails. Indeed it is easy to construct a sequence of functions (real numbers) that 
give rise to (infinitely many) different Young measures generated by different subsequences for which \eqref{S1}, \eqref{S2} 
or even \eqref{Su1}, \eqref{Su2} still hold.

\subsection{Limit of an (S)--convergent sequence}
  
Our next goal is to associate a limit to an (S)--convergent sequence -- a parametrized measure 
\[
\mathcal{V} \in L^\infty_{{\rm weak-(*)}}(Q; \mathcal{M}^+(R^D))
\]
that can be seen as an analogue of the Young measure for weakly converging sequences. To this end, a few preliminary observations are 
needed.

\begin{Lemma} \label{SL1}

Let $b \in C_c(R^D)$ and $w \in W$ be given. 

The following is equivalent:

\begin{itemize}
\item the correlation limit \eqref{Su1} exists for any $m$; 
\item 
\begin{equation} \label{S3}
\frac{1}{w_N} \sum_{n=1}^N w \left( \frac{n}{N} \right)  b(\vc{U}_n) \to \Ov{b(\vc{U})} \ \mbox{weakly-(*) in}\ 
L^\infty(Q) \ \mbox{as}\ N \to \infty.
\end{equation}

\end{itemize}

Moreover, if the correlation limit is independent of $w$, then so is the weak limit $\Ov{b(\vc{U})}$.

\end{Lemma}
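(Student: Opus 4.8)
The plan is to transfer the problem from the weak-$(*)$ topology of $L^\infty(Q)$ to the weak topology of the Hilbert space $L^2(Q)$, where the decisive point becomes a transparent orthogonality statement. Throughout I write $f_n := b(\vc{U}_n)$ and $V_N^w := \frac{1}{w_N}\sum_{n=1}^N w(n/N)\, f_n$. Since $b \in C_c(R^D)$ and $Q$ has finite measure, the functions $f_n$ are uniformly bounded in $L^\infty(Q) \cap L^2(Q)$, hence so are the averages $V_N^w$. For a uniformly $L^\infty$-bounded sequence on a finite measure space, weak-$(*)$ convergence in $L^\infty(Q)$ and weak convergence in $L^2(Q)$ coincide (test functions in $L^2(Q) \subset L^1(Q)$ give one inclusion, while density of $L^2(Q)$ in $L^1(Q)$ together with the uniform bound gives the other), so it suffices to argue in $L^2(Q)$, and the weak limit automatically inherits the $L^\infty$ bound.

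The direction \eqref{S3} $\Rightarrow$ \eqref{Su1} is immediate: if $V_N^w \to \Ov{b(\vc{U})}$ weakly in $L^2(Q)$, then testing against the fixed function $f_m \in L^2(Q)$ produces exactly the correlation limit \eqref{Su1}, equal to $\langle \Ov{b(\vc{U})}, f_m \rangle$.

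For the converse I would first use boundedness in $L^2(Q)$ and reflexivity to extract weakly convergent subsequences, the goal being to show that every weak accumulation point is the same. The key structural observation is that each average $V_N^w$ lies in the closed linear span $\mathcal{H}_0 := \overline{\mathrm{span}}^{\,L^2(Q)}\{f_m : m \ge 1\}$, which, being a closed subspace, is weakly closed; hence every weak accumulation point of $\{V_N^w\}$ again lies in $\mathcal{H}_0$. Next, if $V_{N_k}^w \to V$ weakly, then for each fixed $m$ one has $\langle V, f_m \rangle = \lim_k \langle V_{N_k}^w, f_m \rangle$, and since the full-sequence correlation limit \eqref{Su1} exists, this value equals $L_m := \lim_N \langle V_N^w, f_m \rangle$ independently of the subsequence. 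Consequently any two accumulation points $V^{(1)}, V^{(2)}$ satisfy $\langle V^{(1)} - V^{(2)}, f_m \rangle = 0$ for all $m$, so $V^{(1)} - V^{(2)} \in \mathcal{H}_0^\perp$; but $V^{(1)} - V^{(2)} \in \mathcal{H}_0$ as well, forcing $V^{(1)} = V^{(2)}$. Uniqueness of the accumulation point upgrades to convergence of the whole sequence, which is \eqref{S3}.

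The \emph{moreover} assertion follows from the same orthogonality principle applied across different weights. For each $w$ the limit $\Ov{b(\vc{U})}^{\,w}$ lies in $\mathcal{H}_0$ and is characterized by $\langle \Ov{b(\vc{U})}^{\,w}, f_m \rangle = L_m^w$ for all $m$. If the correlation limits $L_m^w$ do not depend on $w$, then for two weights $w_1, w_2$ the difference $\Ov{b(\vc{U})}^{\,w_1} - \Ov{b(\vc{U})}^{\,w_2}$ again lies in $\mathcal{H}_0 \cap \mathcal{H}_0^\perp = \{0\}$. The step I expect to require the most care is the structural observation that the averages remain inside $\mathcal{H}_0$: it is precisely this that allows the partial information ``inner products against every $f_m$'' to determine the limit uniquely, something that would fail for an arbitrary bounded sequence in $L^2(Q)$.
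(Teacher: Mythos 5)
Your proposal is correct and takes essentially the same route as the paper: both pass from the weak-$(*)$ topology of $L^\infty(Q)$ to weak convergence in the Hilbert space $L^2(Q)$ and exploit the decomposition $L^2(Q) = Y \oplus Y^{\perp}$, with $Y$ the closed linear span of $\{ b(\vc{U}_m) \}_{m \geq 1}$ and the key observation that the weighted averages lie in $Y$. The only difference is one of packaging: where the paper extends the correlation limits from the generators $b(\vc{U}_m)$ to all of $Y$ (implicitly by density and the uniform $L^2$ bound) and reads off the weak limit directly, you extract weakly convergent subsequences and identify all accumulation points via $Y \cap Y^{\perp} = \{0\}$ --- a slightly more explicit rendering of the same orthogonality argument, which also handles the ``moreover'' clause just as the paper does.
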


\begin{proof}

Obviously \eqref{S3} $\Rightarrow$ \eqref{Su1} for any $m$; whence it is enough to show the opposite implication.  
Write the Hilbert space $L^2(Q)$ as 
\[
L^2(Q) = Y \oplus Y^\perp, 
\]
where 
\[
Y \equiv \Ov{ {\rm span} \left\{ b(\vc{U}_m)\ \Big|\ m=1,2,\dots \right\} }^{L^2(Q)}
\]
Denote 
\[
B_N \equiv \frac{1}{w_N} \sum_{n=1}^N w \left( \frac{n}{N} \right) b(\vc{U}_n). 
\]
Obviously $B_N \in Y$, and, by virtue of \eqref{Su1}, 
\[
\lim_{N \to \infty} \int_Q B_N \ \phi \ \D y \ \mbox{exists for any} \ \phi \in Y.
\]
On the other hand, 
\[
\int_Q B_N \ v \ \D y = 0 \ \mbox{whenever}\ v \in Y^\perp.
\]
Writing any $u \in L^2(Q)$ as $u = \phi \oplus \phi^\perp$ we may infer that 
\[
B_N \to \Ov{b(\vc{U})} \ \mbox{weakly in}\ L^2(Q) \ \mbox{as}\ N \to \infty.
\]
As $b(\vc{U}_n)$ are uniformly bounded, this yields \eqref{S3}.

\end{proof}

The strong correlation limit postulated in \eqref{Su1} may seem difficult to check. It can be simplified as stated in the following 
assertion.

\begin{Lemma} \label{SCL}

Let $\{ \vc{U}_n \}_{n=1}^\infty$ be a sequence of measurable functions, $\vc{U}_n: Q \to R^D$. Let $b \in C_c(R^D)$ be given. 

Then the following is equivalent:

\begin{itemize}

\item the strong correlation limit \eqref{Su1} exists and is independent of $w \in W$; 

\item the limit

\begin{equation} \label{SCL1}
\frac{1}{\beta - \alpha} \lim_{N \to \infty} \frac{1}{N} \sum_{\alpha N \leq n \leq \beta N} \int_Q b(\vc{U}_n) b(\vc{U}_m) \ \D y   
\end{equation}
exists for any fixed $m$, any $0 \leq \alpha < \beta \leq 1$, and is independent of $\alpha, \beta$.

\end{itemize}

\end{Lemma}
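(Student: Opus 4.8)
The plan is to reduce everything to a statement about the single bounded scalar sequence $a_n := \int_Q b(\vc{U}_n) b(\vc{U}_m)\,\D y$ (with $m$ fixed), bounded since $b \in C_c(R^D)$ and $Q$ has finite measure, say $|a_n| \le A$. For $w \in C^1[0,1]$ I set $T_N(w) := \frac{1}{N}\sum_{n=1}^N w(n/N)\, a_n$. The first observation is that $\frac{w_N}{N} = \frac{1}{N}\sum_{n=1}^N w(n/N)$ is a Riemann sum converging to $\int_0^1 w = 1$ for $w \in W$, so the expression in \eqref{Su1} equals $\frac{N}{w_N}\,T_N(w)$ and has the same limit as $T_N(w)$. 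Replacing the $1/w_N$ normalization by $1/N$ is the crucial preliminary move, because $T_N$ is \emph{linear} in $w$, whereas $W$ is only a convex set.

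For the implication \eqref{SCL1} $\Rightarrow$ \eqref{Su1} I would fix $w \in W$ and approximate it by a step function $w^J$ subordinate to a partition $0 = z_0 < \dots < z_J = 1$. Then $T_N(w^J)$ is a finite combination of block averages $\frac{1}{N}\sum_{z_{j-1}N \le n < z_j N} a_n$, each converging by \eqref{SCL1} to $c\,(z_j - z_{j-1})$ with the same constant $c$; hence $T_N(w^J) \to c\sum_j w(z_{j-1})(z_j-z_{j-1})$, which is a Riemann sum for $c\int_0^1 w = c$. The replacement error is controlled uniformly in $N$ by the modulus of continuity of $w$, since $|T_N(w) - T_N(w^J)| \le A\,\frac{1}{N}\sum_{n=1}^N |w(n/N) - w^J(n/N)| \le A\,\omega_w(\mathrm{mesh})$. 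Sending $N \to \infty$ and then $\mathrm{mesh}\to 0$ yields $T_N(w)\to c$, independent of $w$, which is exactly \eqref{Su1}.

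For the converse \eqref{Su1} $\Rightarrow$ \eqref{SCL1} the hypothesis gives $T_N(w)\to c$ with a single $c$ for \emph{all} $w \in W$. I would first upgrade this to all of $C^1[0,1]$: for arbitrary $g$ choose $K \ge \|g\|_\infty$, so that $g+K \ge 0$; then both $(g+K)/\!\int_0^1(g+K)$ and the constant $1$ belong to $W$, and linearity together with scaling give $T_N(g) = T_N(g+K) - K\,T_N(1) \to c\int_0^1 g$. Next I approximate $\ind_{[\alpha,\beta]}$ by $g_\delta \in C^1[0,1]$ with $0 \le g_\delta \le 1$ that agrees with $\ind_{[\alpha,\beta]}$ except on a transition set of measure $\le 4\delta$ near the endpoints. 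Then $|T_N(\ind_{[\alpha,\beta]}) - T_N(g_\delta)| \le A\,(4\delta + O(1/N))$, while $T_N(g_\delta)\to c\int_0^1 g_\delta$ with $|\int_0^1 g_\delta - (\beta-\alpha)|\le 4\delta$. Letting $N\to\infty$ and then $\delta\to 0$ gives $\frac{1}{N}\sum_{\alpha N \le n \le \beta N} a_n \to c\,(\beta-\alpha)$, so \eqref{SCL1} exists and equals $c$ for every $0\le\alpha<\beta\le 1$.

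The main obstacle is the direction \eqref{Su1} $\Rightarrow$ \eqref{SCL1}, and specifically the fact that the target test function $\ind_{[\alpha,\beta]}$ is neither $C^1$ nor reachable by a monotone squeeze: because the correlations $a_n$ are sign-indefinite, a one-sided sandwich $w^- \le \ind_{[\alpha,\beta]} \le w^+$ carries no information. The two devices that resolve this are (i) the affine decomposition $g = (g+K) - K$, which promotes the convex family $W$ to the full vector space $C^1[0,1]$ and thereby turns the $w$-independence of the limit into the statement $T_N(g)\to c\int_0^1 g$; and (ii) the replacement of the pointwise squeeze by an $L^1$-approximation whose error is absorbed by the uniform bound $|a_n|\le A$ over a transition zone of vanishing relative density. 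Everything else is Riemann-sum bookkeeping that is uniform in $N$ owing to the uniform continuity of the fixed weights.
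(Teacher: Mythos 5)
Your proof is correct, and it follows in substance the same route as the paper, but executed at a finer level of detail. The paper encodes the correlations $a_n=\int_Q b(\vc{U}_n) b(\vc{U}_m)\,\D y$ as a step function $B_N$ on $[0,1)$, constant equal to $a_n$ on $\left[\frac{n-1}{N},\frac{n}{N}\right)$, and observes that \eqref{Su1} with a $w$-independent limit is exactly weak-(*) convergence of $B_N$ in $L^\infty(0,1)$ to a constant, whereupon \eqref{SCL1} is the same convergence tested against the normalized indicators $\frac{1}{\beta-\alpha}\ind_{[\alpha,\beta]} \in L^1(0,1)$; the equivalence of the two test classes is then asserted in one line, with the density arguments left implicit. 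Your two devices supply precisely those implicit details: the step-function approximation of $w$ with error $A\,\omega_w(\mathrm{mesh})$ uniform in $N$ gives the direction \eqref{SCL1} $\Rightarrow$ \eqref{Su1}, and the affine decomposition $g=(g+K)-K$, which promotes the convex set $W$ to the full space $C^1[0,1]$ before $L^1$-smoothing of $\ind_{[\alpha,\beta]}$, gives the converse --- and this last point is a genuine necessity, not bookkeeping, since (as you correctly stress) a monotone squeeze is useless for sign-indefinite $a_n$, so the paper's terse ``equivalent'' really does rest on exactly this affine-hull step. Two minor repairs: choose $K>\|g\|_\infty$ strictly, so that $\int_0^1 (g+K)>0$ and the normalization $(g+K)/\int_0^1(g+K)\in W$ is well defined even when $g$ is the constant $-\|g\|_\infty$; and note that the mismatch between the summation ranges $\alpha N\le n\le \beta N$ in \eqref{SCL1} and $\alpha N\le n<\beta N$ arising from your partition blocks contributes only $O(A/N)$ per endpoint, which your estimates absorb but which deserves a word.
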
  

\begin{proof}

Consider the function  
\[
B_N(z, \phi) = \int_Q b(\vc{U}_n) \phi \ \D y \ \mbox{for}\ z \in \left[ \frac{n-1}{N}, \frac{n}{N} \right),\ 
1 \leq n \leq N, \ z \in [0,1) 
\]
In accordance with Lemma \ref{SL1}, the existence of the strong correlation limit \eqref{S3} can be equivalently stated as 
\[
\int_0^1 w(z) B_N(z, \phi) \ \D z \to \int_0^1 w(z) \int_Q \Ov{ b (\vc{U} )} \phi \ \D y 
\ \D z  \ \mbox{for any}\ 
\phi \in L^1(Q), \ w \in W.
\]
In other words 
\begin{equation} \label{sscs}
B_N(z, \phi)  \to \int_Q \Ov{ b (\vc{U} )} \phi \ \D y \ \mbox{weakly-(*) in}\ L^\infty(0,1), 
\end{equation}
where the hypothesis that the correlation limit is independent of $w$ is reflected by the fact that the weak limit 
$\int_Q \Ov{ b (\vc{U} )} \phi \ \D y$ is independent of $z$. In fact, relation \eqref{sscs} is equivalent with 
\eqref{Su1} which yields the desired conclusion.

\end{proof}

We are ready to define a limit for an (S)--convergent sequence.
Identifying 
\[
L^\infty_{{\rm weak - (*)}} (Q; \mathcal{M}(R^D)) 
\ \mbox{with the dual} \ \left[ L^1(Q; C_0 (R^D)) \right]^*
\]
we define 
\[
\left< \mathcal{V}; \varphi \ b \right> = \int_Q \varphi \ \Ov{b(\vc{U})} \ \D y \ \mbox{for}\ \varphi \in L^1(Q),\ 
b \in C_c(R^D),  
\]
where $\Ov{b(\vc{U})}$ is the (unique) limit identified in \eqref{S3}. This defines a unique linear functional on 
$L^1(Q; C_0 (R^D))$ via standard density argument. Obviously, the limit measure is bounded non--negative, 
\[
\mathcal{V} \in L^\infty_{{\rm weak-(*)}}(Q; \mathcal{M}^+(R^D)).
\] 
The measure $\mathcal{V}$ will be termed \emph{(S)--limit} of the sequence $\{ \vc{U}_n \}_{n=1}^\infty$, and we shall write 
\begin{equation} \label{S3a}
\vc{U}_n  \toS \mathcal{V}.
\end{equation}

\subsection{Equivalence with convergence of ergodic means}

To give a more specific meaning to \eqref{S3a}, we need the following result.

\begin{Theorem}[Equivalence principle] \label{SL2}
\

{\bf (i)} The following is equivalent:

\begin{itemize}

\item $\{ \vc{U}_n \}_{n=1}^\infty$ is weakly (S)--convergent;
\item 
\begin{equation} \label{S4}
\frac{1}{N} \sum_{n=1}^N b(\vc{U}_n) \to \Ov{b (\vc{U})} \ \mbox{(strongly) in}\ L^1(Q) 
\end{equation}
for any $b \in C_c(R^D)$.

\end{itemize}

\medskip 

{\bf (ii)} The following is equivalent:

\begin{itemize}

\item $\{ \vc{U}_n \}_{n=1}^\infty$ is strongly (S)--convergent;
\item 
\begin{equation} \label{Su4}
\frac{1}{w_N} \sum_{n=1}^N w \left( \frac{n}{N} \right) b(\vc{U}_n) \to \Ov{b (\vc{U})} \ \mbox{(strongly) in}\ L^1(Q) 
\end{equation}
for any $b \in C_c(R^D)$, and any $w \in W$, where the limit is independent of $w$.

\end{itemize}

\end{Theorem}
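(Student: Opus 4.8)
The plan is to transcribe everything into the Hilbert space $L^2(Q)$ and to recognize the two correlation conditions as encoding, respectively, \emph{weak convergence} and \emph{convergence of norms} of the (weighted) Ces\`aro averages. Fix $b \in C_c(R^D)$ and set $f_n := b(\vc{U}_n)$; since $b$ is bounded, the $f_n$ are uniformly bounded in $L^\infty(Q)$, hence (as $Q$ has finite measure) bounded in $L^2(Q)$, and I will write $\langle f, g\rangle := \int_Q fg \, \D y$ for the inner product. Because $\|f_n - g\|_{L^2(Q)}^2 \le 2\|b\|_\infty \|f_n - g\|_{L^1(Q)}$ and conversely $\|\cdot\|_{L^1(Q)} \le |Q|^{1/2}\|\cdot\|_{L^2(Q)}$, strong $L^1$- and strong $L^2$-convergence of any such uniformly bounded sequence are equivalent; it therefore suffices to prove the stated convergences in $L^2(Q)$. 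Throughout I abbreviate, for $w \in W$, $A_N^w := \frac{1}{w_N}\sum_{n=1}^N w(n/N) f_n$, and $A_N := A_N^1$ (the admissible choice $w\equiv 1 \in W$), so that \eqref{Su4} reads $A_N^w \to \Ov{b(\vc{U})}$ and \eqref{S4} reads $A_N \to \Ov{b(\vc{U})}$.

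For the easy implication, suppose first that $A_N^w \to g$ strongly in $L^2(Q)$ (with $g$ independent of $w$ in case (ii)). Pairing with $f_m$ gives $\langle A_N^w, f_m\rangle \to \langle g, f_m\rangle$, which is exactly the strong correlation limit \eqref{Su1} (resp. \eqref{S1} for $w\equiv1$), independent of $w$. For the disintegration, the left-hand side of \eqref{Su2} is precisely $\|A_N^w\|_{L^2(Q)}^2 \to \|g\|_{L^2(Q)}^2$, while on its right-hand side the inner limit is $\langle g, f_m\rangle$ and the outer weighted average equals $\langle g, A_M^w\rangle \to \langle g,g\rangle = \|g\|_{L^2(Q)}^2$ by the already-established weak convergence; the two sides agree, giving \eqref{Su2} (resp. \eqref{S2}).

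The substantive implication is the converse, and it is where the argument really runs. Assume the correlation limit and the disintegration. By Lemma \ref{SL1} (applied with $w\equiv 1$ in case (i), and for each $w \in W$ in case (ii)), the correlation limit already yields $A_N^w \to g$ weakly-$(*)$ in $L^\infty(Q)$, hence weakly in $L^2(Q)$, where the limit $g = \Ov{b(\vc{U})}$ is independent of $w$ thanks to the final clause of that lemma. Consequently the inner limit in the disintegration identity is $\langle g, f_m\rangle$, and its outer weighted average is $\langle g, A_M^w\rangle \to \langle g, g \rangle = \|g\|_{L^2(Q)}^2$. Since the left-hand side of the disintegration is identically $\|A_N^w\|_{L^2(Q)}^2$, the postulated equality forces $\|A_N^w\|_{L^2(Q)} \to \|g\|_{L^2(Q)}$. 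Weak convergence together with convergence of norms is equivalent to strong convergence in a Hilbert space, via $\|A_N^w - g\|_{L^2(Q)}^2 = \|A_N^w\|_{L^2(Q)}^2 - 2\langle A_N^w, g\rangle + \|g\|_{L^2(Q)}^2 \to 0$; this delivers $A_N^w \to g$ strongly in $L^2(Q)$, hence in $L^1(Q)$.

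The main obstacle is conceptual rather than computational: one must read \eqref{S1}/\eqref{Su1} as ``weak convergence of the averages'' (supplied by Lemma \ref{SL1}) and \eqref{S2}/\eqref{Su2} as ``convergence of their norms,'' and then invoke the Hilbert-space identity above. The only genuine technicalities are the equivalence of $L^1$- and $L^2$-convergence for uniformly bounded sequences on the finite-measure set $Q$, and the bookkeeping that the weak limit $g$ is the same for every weight $w$ in case (ii) --- the latter guaranteed precisely by the independence-of-$w$ hypothesis together with the last assertion of Lemma \ref{SL1}, so that the strong limit in \eqref{Su4} is well defined and $w$-independent.
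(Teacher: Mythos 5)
Your proof is correct and takes essentially the same route as the paper's: Lemma \ref{SL1} turns the (strong) correlation limit into weak-$(*)$ convergence of the weighted Ces\`aro averages, the disintegration identity \eqref{Su2} is read as convergence of their $L^2(Q)$ norms, and weak convergence plus norm convergence in the Hilbert space yields strong $L^2(Q)$, hence $L^1(Q)$, convergence. You merely make explicit what the paper leaves terse, namely the easy direction and the identification of the right-hand side of \eqref{Su2} with $\int_Q |\Ov{b(\vc{U})}|^2 \ \D y$ via a second application of the weak convergence.
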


\begin{proof}

It is enough to show the equivalence for strongly (S)--convergent sequences. 
As \eqref{Su4} clearly implies strong (S)--convergence, we focus on the opposite implication. 
By virtue of Lemma \ref{SL1}, we have 
\[
B_N \equiv \frac{1}{w_N} \sum_{n=1}^N w \left( \frac{n}{N} \right) b(\vc{U}_n) \to \Ov{b(\vc{U})} \ \mbox{weakly-(*) in}\ L^\infty(Q),
\]
where the limit is independent of the weight function $w$.
Accordingly, relation \eqref{Su4} can be reformulated as 
\[
\int_Q |B_N|^2 \ \D y \to \int_Q |\Ov{b(\vc{U})} |^2 \D y \ \mbox{as}\ N \to \infty,
\]
which is nothing other than \eqref{Su2}.

\end{proof}

\begin{Remark} \label{SR1}

As $b$ is bounded and $|Q| < \infty$, the convergence in \eqref{Su4} can be replaced by 
\[
\frac{1}{w_N} \sum_{n=1}^N w \left( \frac{n}{N} \right) b(\vc{U}_n) \to \Ov{b (\vc{U})} \ \mbox{in}\ L^q(Q)
\]
for any $1 \leq q < \infty$.

\end{Remark}

Going back, we may rewrite \eqref{S3a} in the form 
\begin{equation} \label{S4a}
\vc{U}_n \toS \mathcal{V} \ \Leftrightarrow \ \int_Q \left| d_{{\rm weak-(*)}} \left[ 
\frac{1}{N} \sum_{n=1}^N \delta_{\vc{U}_n(y)} ; \mathcal{V}_y \right] \right| \D y \to 0 \ \mbox{as}\ N \to \infty, 
\end{equation}
where $\delta$ is the Dirac mass, and 
$d_{{\rm weak}-(*)}$ denotes the metric on a bounded ball in $\mathcal{M}(R^D)$ endowed with the weak-(*) topology. 

Here, the relation $\toS$ is considered for weakly (S)--convergent sequences. If, in addition, the convergence is strong, we may 
identify the limit measure in \eqref{S4a} through  
\begin{equation} \label{S4ab}
\int_Q \left| d_{{\rm weak-(*)}} \left[ 
\frac{1}{w_n} \sum_{n=1}^N w \left( \frac{n}{N} \right) \delta_{\vc{U}_n(y)} ; \mathcal{V}_y \right] \right| \D y \to 0 \ \mbox{as}\ N \to \infty
\end{equation}
choosing \emph{arbitrary} weight function $w$. As observed by Krengel \cite{Kren}, the \emph{rate of convergence} for (S)--converging sequences can be rather slow, which may be embarassing in numerical applications. Das and Yorke \cite{DaYo} therefore 
proposed to introduce the weights $w$ and obtained super convergence for certain quasiperiodic sequences. The possibility of 
choosing suitable $w$ can possibly improve the rate of convergence of numerical approximations.

Although the analogy between $\mathcal{V}$ and the Young measure is obvious, we note that an $(S)-$convergent sequence 
may not generate any Young measure. Still the following result holds:
\begin{Proposition} \label{SP1}

Let $\{ \vc{U}_n \}_{n=1}^\infty$ be uniformly integrable, meaning 
\[
\int_Q F(|\vc{U}_n|) \ \D y \aleq 1 \ \mbox{uniformly for}\ n \to \infty,
\]
where $F: [0, \infty) \to [0, \infty)$ is continuous, $\lim_{r \to \infty} F(r) = \infty$.

Then there is a subsequence $\{ \vc{U}_{n_k} \}_{k=1}^\infty$ that is weakly (S)--convergent and generates a Young measure that coincides with 
its (S)--limit.

\end{Proposition}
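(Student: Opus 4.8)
The goal is to extract a subsequence that simultaneously generates a Young measure \emph{and} is weakly (S)--convergent, with the two limit objects coinciding. My plan is to produce the Young measure first by the standard fundamental theorem, and then to exploit the averaging structure of (S)--convergence together with a Komlós--type result to arrange that the (S)--limit reproduces exactly that Young measure.

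\medskip

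\textbf{Step 1: Produce a Young measure along a subsequence.} The uniform integrability hypothesis, namely $\int_Q F(|\vc{U}_n|)\,\D y \aleq 1$ with $F$ continuous and $F(r) \to \infty$, is precisely the tightness condition of the fundamental theorem of Young measures. Hence there is a subsequence (not relabelled for the moment) and a parametrized measure $\{\mathcal{V}_y\}_{y \in Q} \in L^\infty_{{\rm weak-}(*)}(Q;\mathcal{M}^+(R^D))$ such that for every $b \in C_c(R^D)$,
\[
b(\vc{U}_{n_k}) \to \Ov{b(\vc{U})} = \left< \mathcal{V}_y; b \right> \quad \text{weakly-}(*) \text{ in } L^\infty(Q).
\]

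\textbf{Step 2: Pass to a Komlós subsequence to upgrade Cesàro averages to strong convergence.} The subtlety is that weak-$(*)$ convergence of $b(\vc{U}_{n_k})$ to $\Ov{b(\vc{U})}$ does not by itself give the \emph{strong} $L^1$ convergence of ergodic averages required by the equivalence principle (Theorem \ref{SL2}(i)). To repair this I would invoke Komlós' theorem: since the scalar sequences $b(\vc{U}_{n_k})$ are bounded in $L^1(Q)$, a further subsequence can be chosen so that the Cesàro means of \emph{every} subsequence converge almost everywhere. A diagonal argument over a countable dense set $\{b_j\}$ of $C_c(R^D)$ (dense in the separable space $C_0(R^D)$) yields a single subsequence, still denoted $\{\vc{U}_{n_k}\}$, along which
\[
\frac{1}{N}\sum_{k=1}^N b_j(\vc{U}_{n_k}) \to g_j \quad \text{a.e. in } Q \text{ for every } j.
\]
Because $b_j$ is bounded and $|Q| < \infty$, dominated convergence promotes this to strong $L^1(Q)$ convergence, and a density/uniform-continuity argument extends it from the dense family to all $b \in C_c(R^D)$. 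By Theorem \ref{SL2}(i) this strong convergence of ergodic means is equivalent to weak (S)--convergence, so the subsequence is weakly (S)--convergent.

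\medskip

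\textbf{Step 3: Identify the two limits.} It remains to check that the (S)--limit coincides with the Young measure, i.e. that $g_j = \Ov{b_j(\vc{U})}$. This is where the Young-measure convergence from Step 1 does the work: Cesàro averages of a weakly-$(*)$ convergent sequence converge weakly-$(*)$ to the \emph{same} limit, so
\[
\frac{1}{N}\sum_{k=1}^N b(\vc{U}_{n_k}) \to \Ov{b(\vc{U})} \quad \text{weakly-}(*) \text{ in } L^\infty(Q).
\]
The strong $L^1$ limit from Step 2 must agree with this weak-$(*)$ limit by uniqueness of limits, whence $g = \Ov{b(\vc{U})} = \left< \mathcal{V}_y; b \right>$ for all $b$. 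Therefore the (S)--limit of the subsequence is exactly the Young measure $\mathcal{V}$.

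\medskip

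The main obstacle is Step 2: the jump from weak-$(*)$ convergence of $b(\vc{U}_{n_k})$ to \emph{strong} $L^1$ convergence of its ergodic averages is not automatic and cannot be read off from the Young measure alone. Komlós' theorem is the natural engine here, but one must be careful to organize the double passage—first the diagonalization over the countable dense family $\{b_j\}$, then the extension to all of $C_c(R^D)$—so that a \emph{single} subsequence works uniformly. Provided that is handled, Steps 1 and 3 are standard applications of the fundamental theorem of Young measures and of the equivalence principle already established in Theorem \ref{SL2}.
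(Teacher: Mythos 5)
Your proof is correct and takes essentially the same route as the paper: the paper's one-line proof cites Balder's Koml\'os-based extraction of a single subsequence satisfying \eqref{S4} for every $b \in C_c(R^D)$ and then invokes the equivalence principle (Theorem \ref{SL2}), while you simply unpack that citation --- Koml\'os' theorem with its hereditary (all-further-subsequences) property, diagonalization over a countable dense family, the sup-norm density extension, and identification of the strong $L^1$ limit with the Young measure via uniqueness against the weak-(*) limit. The only difference is that the paper outsources your Steps 2--3 to Balder's theory, so there is nothing substantive to add.
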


\begin{proof} 

Using the celebrated Koml\' os theorem (see Koml\' os \cite{Kom}), 
Balder \cite{Bald} showed the existence of a subsequence satisfying \eqref{S4} for any $b \in C_c(R^D)$; whence the desired 
conclusion follows from Lemma \ref{SL2}.

\end{proof}

\subsection{Convergence in Wasserstein distance}

Up to now, we did not impose any further hypotheses concerning integrability of the generating sequence. In applications, 
integrability is usually assumed to ensure finiteness of the first moments, and, in particular, to identify the barycenter of
the measure  
$\mathcal{V}$. 

\begin{Proposition} \label{QP1}

Let $\{ \vc{U}_n \}_{n=1}^\infty$ be a sequence of measurable functions satisfying 
\[
\int_Q { |\vc{U}_n |^p \ \D y } \leq c \ \mbox{uniformly for}\ n=1,2,\dots, \ p \geq 1. 
\]
Let 
\[
\vc{U}_n \toS \mathcal{V}.
\]

Then 
\begin{itemize}
\item 
$\mathcal{V}_y$ is a probability measure on $R^D$ for a.a. $y \in Q$, with finite moments of order $p$.

\item If $p > 1$, then 
\[
\int_Q \left| d_{W_s} \left[ \frac{1}{N} \sum_{n=1}^N \delta_{\vc{U}_n(y)}; \mathcal{V}_y \right] \right|^s \ \D y
\to 0 \ \mbox{as}\ N \to \infty, 
\]
where $d_{W_s}$ denotes the Wasserstein distance of $s-$th order.

In particular, the barycenters converge,  
\[
\frac{1}{N} \sum_{n=1}^N \vc{U}_n \to \left< \mathcal{V}; \widetilde{\vc{U}} \right> \equiv \vc{U} 
\ \mbox{as}\ N \to \infty,
\]
in $L^s(Q)$, $1 \leq s < q$.

\end{itemize}

\end{Proposition}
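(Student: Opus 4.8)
The plan is to first pin down the limit object $\mathcal{V}_y$ and then upgrade the weak-$(*)$ convergence already guaranteed by $\vc{U}_n \toS \mathcal{V}$ to convergence in the Wasserstein distance, the uniform $L^p$ bound supplying exactly the tail control needed for this passage. Throughout I write $\nu_N^y \equiv \frac{1}{N}\sum_{n=1}^N \delta_{\vc{U}_n(y)}$, which is a probability measure for every $y$. For the first assertion, recall that by the equivalence principle (Theorem \ref{SL2}) one has $\frac{1}{N}\sum_{n=1}^N b(\vc{U}_n) \to \Ov{b(\vc{U})}$ in $L^1(Q)$ for each $b \in C_c(R^D)$, with $\mathcal{V}$ defined through $\langle \mathcal{V}; \varphi\, b\rangle = \int_Q \varphi\, \Ov{b(\vc{U})}\ \D y$. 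Choosing cut-offs $b_k \in C_c(R^D)$ with $0 \le b_k \le 1$ and $b_k \nearrow 1$, Chebyshev's inequality together with the uniform bound gives $\int_Q \varphi\,(1 - b_k(\vc{U}_n))\ \D y \lesssim \|\varphi\|_\infty\, k^{-p}$ \emph{uniformly in $n$}; passing to the limit $N \to \infty$ and then $k \to \infty$ (monotone convergence for the measure $\mathcal{V}_y$) yields $\langle \mathcal{V}_y; 1\rangle = 1$ for a.a.\ $y$. Testing analogously with $g_k \in C_c(R^D)$, $0 \le g_k \nearrow |\xi|^p$, and using $\int_Q g_k(\vc{U}_n)\ \D y \le c$ gives $\int_Q \langle \mathcal{V}_y; |\xi|^p\rangle\ \D y \le c$, so $\mathcal{V}_y$ has finite $p$-th moment a.e.

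The key intermediate step is that the convergence $\frac{1}{N}\sum_{n=1}^N b(\vc{U}_n) \to \langle \mathcal{V}_y; b\rangle$ in $L^1(Q)$ persists for every continuous $b$ of polynomial growth $|b(\xi)| \lesssim 1 + |\xi|^s$ with $1 \le s < p$. Indeed, truncating $b$ to $b_R \in C_c(R^D)$ that agrees with $b$ on $\{|\xi| \le R\}$, the error is supported on $\{|\vc{U}_n| > R\}$ where it is dominated by $|\vc{U}_n|^p/R^{p-s}$, so the $L^p$ bound makes it $O(R^{-(p-s)})$ uniformly in $N$; the same estimate controls $\langle \mathcal{V}_y; b - b_R\rangle$ via the finite $p$-th moment. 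Specialising to $b(\xi)=|\xi|^s$ then gives the convergence of $s$-th moments, $\int_{R^D} |\xi|^s\ \D\nu_N^y \to \int_{R^D} |\xi|^s\ \D\mathcal{V}_y$ in $L^1(Q)$.

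To reach the Wasserstein statement, note that by the definition of $\toS$, cf.\ \eqref{S4a}, we have $\int_Q d_{{\rm weak}-(*)}[\nu_N^y ; \mathcal{V}_y]\ \D y \to 0$. Since both measures are probabilities, from any subsequence I can extract a further subsequence along which $\nu_N^y \rightharpoonup \mathcal{V}_y$ narrowly for a.a.\ $y$, while the previous step yields $\int|\xi|^s\D\nu_N^y \to \int|\xi|^s\D\mathcal{V}_y$ a.e.\ as well; the classical characterisation of convergence in Wasserstein distance (narrow convergence together with convergence of the $s$-th moments) then gives $d_{W_s}[\nu_N^y;\mathcal{V}_y] \to 0$ a.e.\ along that subsequence. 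To integrate this back up, I use the triangle inequality $d_{W_s}[\nu_N^y;\mathcal{V}_y]^s \lesssim \int|\xi|^s\D\nu_N^y + \int|\xi|^s\D\mathcal{V}_y$ as a domination: the right-hand side converges in $L^1(Q)$, so the generalised dominated convergence theorem (Pratt's lemma) forces $\int_Q d_{W_s}[\nu_N^y;\mathcal{V}_y]^s\ \D y \to 0$ along the subsequence, and the subsequence principle promotes this to the full sequence. Finally, since the barycenter map is $1$-Lipschitz from $(\mathcal{P}(R^D),d_{W_s})$ into $R^D$, we have $|\frac{1}{N}\sum_{n=1}^N \vc{U}_n(y) - \vc{U}(y)| \le d_{W_s}[\nu_N^y;\mathcal{V}_y]$ pointwise, whence the asserted $L^s(Q)$ convergence of barycenters for $1 \le s < p$.

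The delicate point is the last paragraph. The weak-$(*)$ convergence supplied by $\toS$ is only an \emph{integrated} (in $y$) statement, so pointwise narrow convergence is available merely along subsequences, and the Wasserstein distance is in no way controlled by the weak-$(*)$ metric uniformly. The hypothesis $p > s$ plays a double role that I expect to be the crux: it furnishes the uniform integrability (tail control) that upgrades narrow to Wasserstein convergence pointwise, and simultaneously provides the $L^1(Q)$-convergent dominating function needed to pass the pointwise convergence back to an $L^s(Q)$ statement. Reconciling the several subsequence extractions through the subsequence principle is what makes the full-sequence conclusion rigorous.
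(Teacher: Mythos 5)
Your proof is correct, and it is worth noting that the paper itself does not actually prove Proposition \ref{QP1}: its ``proof'' is a one-line remark that the argument is standard and proceeds as for Young measures, with a pointer to \cite{FeiLukMizSheWa}. Your write-up is a correct and complete instantiation of exactly that standard argument, so there is no divergence in method, only in level of detail. All the genuinely delicate points are handled properly: the Chebyshev/cut-off step (which, as you implicitly show, needs only uniform integrability for the probability claim --- consistent with Proposition \ref{SP1} --- while the $p$-th moment bound is what survives to the limit measure via monotone convergence); the truncation argument extending the test class from $C_c(R^D)$ to continuous functions of growth $s<p$, with the error $O(R^{-(p-s)})$ uniform in $N$ on both the $\nu_N^y$ and $\mathcal{V}_y$ sides; the upgrade from the vague (weak-$(*)$ against $C_0$) convergence encoded in \eqref{S4a} to narrow convergence, which is legitimate precisely because you first established $\left< \mathcal{V}_y ; 1 \right> = 1$ a.e.; the classical characterisation of $d_{W_s}$-convergence as narrow convergence plus convergence of $s$-th moments; and the Pratt/generalised-dominated-convergence step with the domination $d_{W_s}[\nu_N^y;\mathcal{V}_y]^s \lesssim \int |\xi|^s \, \D \nu_N^y + \left< \mathcal{V}_y; |\xi|^s \right>$ (triangle inequality through $\delta_0$), closed by the subsequence principle for the real sequence $\int_Q d_{W_s}^s \, \D y$. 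The $1$-Lipschitz barycenter bound (Jensen applied to an optimal coupling, valid since $s \geq 1$) correctly reduces the last assertion to the Wasserstein one. Two cosmetic remarks: the proposition's exponent range ``$1 \leq s < q$'' is evidently a typo for $1 \leq s < p$, which your proof confirms is the natural range; and in the Chebyshev step the estimate $\int_Q \varphi\,(1-b_k(\vc{U}_n)) \, \D y \lesssim \|\varphi\|_{L^\infty} k^{-p}$ should carry absolute values (or assume $\varphi \geq 0$), while \eqref{S4a} is, strictly speaking, a reformulation derived from Theorem \ref{SL2} rather than the literal definition of $\toS$ --- both harmless.
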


\begin{proof} 

The proof is quite standard and can be done in the same way as for the Young measures, see e.g. 
\cite{FeiLukMizSheWa}.

\end{proof}

\begin{Remark} \label{RRS11}

If the sequence $\{ \vc{U}_n \}_{n=1}^\infty$ in Proposition \ref{QP1} is strongly (S)--convergent, then 
\[
\frac{1}{N} \sum_{n=1}^N \ \mbox{may be replaced by} \ \frac{1}{w_N} \sum_{n=1}^N w \left( \frac{n}{N} \right)
\]
for any weight $w \in W$.

\end{Remark}

As a matter of fact, the measure $\mathcal{V}$ is a probability measure under very mild assumption of uniform integrability 
stated in Proposition \ref{SP1}. If $s = 1$, the celebrated Koml\' os theorem (see \cite{Kom}) yields  a \emph{subsequence} $\{ \vc{U}_{n_k} \}_{k=1}^\infty$ such that 
\[
\frac{1}{N} \sum_{k=1}^N \vc{U}_{n_k} \to \vc{U}_K \ \mbox{a.a. in}\ Q. 
\] 
Moreover, the theory of Balder \cite{Bald} asserts that this subsequence can be chosen in such a way that 
\[
\vc{U}_{n_k} \toS \mathcal{V}_K \ \mbox{as}\ k \to \infty, 
\]
where $\mathcal{V}_K$ is the Young measure generated by $\{ \vc{U}_{n_k} \}_{k=1}^\infty$, cf. Proposition \ref{SP1}. Note, however, that there may be different subsequences generating different limits; whence, in general 
\[
\vc{U}_K \ne \vc{U},\ \mathcal{V}_K \neq \mathcal{V}.
\] 

To conclude, let us point out, that the strength of the concept of (S)--convergence lies in the fact there is no need for subsequence, which is 
particularly relevant in numerical applications. 
Moreover, statistical deviations are eliminated and a large class of approximate sequences can be accommodated as we shall show in the 
next section.

\section{Robustness with respect to statistical perturbations}
\label{Q} 

We start by introducing the concept of statistically equivalent sequences, see e.g. 
Le\' on--Saavedra et al. \cite{SaLiFe}.

\begin{Definition}[Statistical equivalence] \label{DQ1}

We say that two sequences $\{ \vc{U}_n \}_{n=1}^\infty$, $\{ \vc{V}_n \}_{n=1}^\infty$ of measurable 
functions are \emph{statistically equivalent}, 
\[
\{ \vc{U}_n \}_{n=1}^\infty \eqS \{ \vc{V}_n \}_{n=1}^\infty, 
\]
if for any $\ep > 0$, 
\begin{equation} \label{Q1}
\frac{ \# \left\{ k \leq N\ \Big| \int_Q |\vc{U}_n - \vc{V}_n | \ \D y > \ep \right\} }{N}
\to 0 \ \mbox{as}\ N \to \infty.
\end{equation}

\end{Definition}

Next we show that statistical perturbation do not influence the (S)--convergence. 

\begin{Theorem}[Statistical perturbation] \label{QT1}

Suppose that 
\[
\{ \vc{U}_n \}_{n=1} \eqS \{ \vc{V}_n \}_{n=1}^\infty, 
\]
and that 
\[
\vc{U}_n \toS \mathcal{V}.
\]

Then 
\[
\vc{V}_n \toS \mathcal{V}.
\]

\end{Theorem}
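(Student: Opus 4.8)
The plan is to reduce everything to the Cesàro (ergodic-mean) characterization of weak (S)--convergence furnished by the Equivalence principle, Theorem \ref{SL2}(i). Since $\vc{U}_n \toS \mathcal{V}$ means that $\{ \vc{U}_n \}_{n=1}^\infty$ is weakly (S)--convergent with (S)--limit $\mathcal{V}$, Theorem \ref{SL2}(i) gives, for every $b \in C_c(R^D)$,
\[
\frac{1}{N}\sum_{n=1}^N b(\vc{U}_n) \to \Ov{b(\vc{U})} \quad \mbox{in } L^1(Q).
\]
It therefore suffices to prove that the analogous ergodic means built from $\{ \vc{V}_n \}_{n=1}^\infty$ converge to the \emph{same} limit $\Ov{b(\vc{U})}$; a second appeal to Theorem \ref{SL2}(i) then yields $\vc{V}_n \toS \mathcal{V}$. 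By the triangle inequality in $L^1(Q)$ this comes down to showing that, for each fixed $b \in C_c(R^D)$,
\[
\frac{1}{N}\sum_{n=1}^N \big\| b(\vc{V}_n) - b(\vc{U}_n) \big\|_{L^1(Q)} \to 0 \quad \mbox{as } N \to \infty.
\]

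To this end I would exploit that $b$ is bounded and \emph{uniformly} continuous. Fix $\ep > 0$ and choose $\delta > 0$ so that $|X - Y| < \delta$ implies $|b(X) - b(Y)| < \ep$. For each $n$ split $Q$ into the ``good'' set $G_n = \{ y : |\vc{U}_n(y) - \vc{V}_n(y)| < \delta \}$, where the integrand stays below $\ep$, and its complement $A_n$, where only $|b(\vc{U}_n) - b(\vc{V}_n)| \leq 2\|b\|_\infty$ is available. A Markov (Chebyshev) inequality bounds the size of the bad set, $|A_n| \leq \delta^{-1} a_n$ with $a_n := \int_Q |\vc{U}_n - \vc{V}_n| \ \D y$, and of course $|A_n| \leq |Q|$ unconditionally. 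Hence
\[
\big\| b(\vc{V}_n) - b(\vc{U}_n) \big\|_{L^1(Q)} \leq \ep |Q| + 2\|b\|_\infty |A_n|,
\]
and it remains only to control the Cesàro mean of $|A_n|$.

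Here the statistical equivalence \eqref{Q1}, which is precisely the statement that $a_n \to 0$ \emph{statistically} (with asymptotic density one), enters. The key point is the elementary fact that a nonnegative sequence converging statistically to zero, while dominated by a bounded quantity, has vanishing Cesàro average. Concretely, for an auxiliary threshold $\tau > 0$ I split the indices $n \leq N$ according to whether $a_n \leq \tau$ or $a_n > \tau$: on the former $|A_n| \leq \tau / \delta$, while the latter form a set of indices of vanishing density by \eqref{Q1}, on which merely $|A_n| \leq |Q|$. This gives
\[
\frac{1}{N}\sum_{n=1}^N |A_n| \leq \frac{\tau}{\delta} + |Q| \, \frac{ \# \{ n \leq N : a_n > \tau \} }{N},
\]
whence $\limsup_{N} \frac{1}{N}\sum_{n=1}^N |A_n| \leq \tau / \delta$; letting $\tau \to 0$ shows the Cesàro mean of $|A_n|$ tends to zero. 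Combined with the previous display this yields $\limsup_{N} \frac{1}{N}\sum_{n=1}^N \| b(\vc{V}_n) - b(\vc{U}_n) \|_{L^1(Q)} \leq \ep |Q|$, and since $\ep$ was arbitrary the required convergence follows.

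The main obstacle, and the only genuinely non-routine point, is exactly this passage from \emph{statistical} (density) convergence of the scalar defects $a_n$ to \emph{Cesàro} convergence of the transported defects $\| b(\vc{V}_n) - b(\vc{U}_n) \|_{L^1(Q)}$. The subtlety is twofold: $b$ is merely uniformly continuous rather than Lipschitz, and the $a_n$ need not be bounded, so one cannot naively average the pointwise bound $\| b(\vc{V}_n) - b(\vc{U}_n) \|_{L^1(Q)} \leq \ep |Q| + 2\|b\|_\infty \delta^{-1} a_n$, whose last term may be uncontrolled. The two-scale cutoff above --- the continuity scale $\delta$ fixed first, the density threshold $\tau$ sent to zero afterwards, together with the unconditional bound $|A_n| \leq |Q|$ --- resolves both difficulties at once. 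Everything else, namely the triangle inequality and the two invocations of Theorem \ref{SL2}, is entirely routine.
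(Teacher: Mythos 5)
Your proof is correct, and its skeleton coincides with the paper's: both arguments reduce the claim, via the Equivalence principle (Theorem \ref{SL2}(i)), to showing that the Ces\`aro means of $b(\vc{V}_n)$ converge in $L^1(Q)$ to the same limit $\Ov{b(\vc{U})}$, and both split the index set $\{1,\dots,N\}$ into the density--one set where $a_n = \| \vc{U}_n - \vc{V}_n \|_{L^1(Q)}$ is small (handled by continuity of $b$) and its density--zero complement (handled by boundedness of $b$ together with the statistical equivalence \eqref{Q1}). The one genuine difference is how the continuity of $b$ is exploited. The paper first takes $b$ Lipschitz, so that on the good indices $\| b(\vc{V}_n) - b(\vc{U}_n) \|_{L^1(Q)} \leq \mathrm{Lip}(b)\, \ep$ follows in one line, and then recovers arbitrary $b \in C_c(R^D)$ by a uniform density argument (legitimate since the Ces\`aro averaging is linear in $b$ and bounded by $|Q|\,\| b - b' \|_\infty$ uniformly in $N$). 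You instead treat arbitrary $b \in C_c(R^D)$ directly: Chebyshev converts $L^1$--proximity of $\vc{U}_n,\vc{V}_n$ into pointwise proximity off a set $A_n$ with $|A_n| \leq a_n/\delta$, uniform continuity controls the good region, and your second threshold $\tau$ plus the unconditional bound $|A_n| \leq |Q|$ handle the averaging. This makes your version slightly more self-contained (no approximation step), at the cost of one extra cutoff; note also that the ``unbounded $a_n$'' difficulty you emphasize does not arise in the paper's route, because on the good set $r(\ep)$ the Lipschitz bound controls the transported defect by $\mathrm{Lip}(b)\,\ep$ directly, confining the unboundedness to the density--zero indices exactly as in your scheme. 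In both proofs the identification of the common limit, and hence of the common (S)--limit measure $\mathcal{V}$, is then immediate from Theorem \ref{SL2}(i).
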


\begin{proof}

In view of Lemma \ref{SL2}, we have to show that 
\begin{equation} \label{Q2}
\frac{1}{N} \sum_{n=1}^N b(\vc{V}_n) = \Ov{b(\vc{V})} = \Ov{b(\vc{U})} = \frac{1}{N} \sum_{n=1}^N b(\vc{U}_n) 
\ \mbox{for any}\ b \in C_c(R^D).
\end{equation}
Suppose $b \in C_c(R^D)$ is Lipschitz.
Choose $\ep > 0$ and define the set  
\[
r(\ep) = \left\{ n \ \mbox{positive integer}\ \Big| \ \| \vc{V}_n - \vc{U}_n \|_{L^1(Q)} \leq \ep \right\}.
\]
Now, compute 
\[
\frac{1}{N} \sum_{n=1}^N b(\vc{V}_n ) = \frac{1}{N} \sum_{ n \leq N, n \in r(\ep) } b(\vc{V}_n ) 
+ \frac{1}{N} \sum_{ n \leq N, n \notin r(\ep) } b(\vc{V}_n ), 
\]
where, in view of \eqref{Q1}, 
\[
\frac{1}{N} \sum_{ n \leq N, n \notin r(\ep) } b(\vc{V}_n ) \to 0 \ \mbox{in}\ L^\infty(Q) \ \mbox{as}\ N \to \infty
\]
as $b$ is bounded.

Next, 
\[
\frac{1}{N} \sum_{ n \leq N, n \in r(\ep) } b(\vc{V}_n ) = 
\frac{1}{N} \sum_{ n \leq N, n \in r(\ep) } b(\vc{U}_n ) +  \frac{1}{N} \sum_{ n \leq N, n \in r(\ep) } \Big[ b(\vc{V}_n ) - b(\vc{U}_n)
\Big],
\]
where, by the same token as above, 
\[
\lim_{N \to \infty} \frac{1}{N} \sum_{ n \leq N, n \in r(\ep) } b(\vc{U}_n ) = 
\lim_{N \to \infty} \frac{1}{N} \sum_{ n =1}^N b(\vc{U}_n ) = \Ov{b(\vc{U})} \ \mbox{in}\ L^1(Q). 
\]

Finally, as $b$ is Lipschitz, 
\[
\left\| 
\frac{1}{N} \sum_{ n \leq N, n \in r(\ep) } \Big[ b(\vc{V}_n ) - b(\vc{U}_n)
\Big] \right\|_{L^1(Q)} \leq \frac{1}{N} \sum_{ n \leq N, n \in r(\ep) } \left\|  b(\vc{V}_n ) - b(\vc{U}_n)
\right\|_{L^1(Q)} \aleq \ep. 
\]
As $\ep > 0$ was arbitrary, this shows \eqref{Q2} for any Lipschitz $b$. The rest follows by standard density argument.

\end{proof}

\begin{Remark} \label{RR1}

Theorem \ref{QT1} refers to weakly (S)--convergent sequences. However, the same argument can be used to show that if 
\[
\{ \vc{U}_n \}_{n=1} \eqS \{ \vc{V}_n \}_{n=1}^\infty, 
\]
then 
\[
\{ \vc{U}_n \}_{n=1} \ \mbox{strongly (S)--convergent} \ \Leftrightarrow \ 
\{ \vc{V}_n \}_{n=1}^\infty \ \mbox{strongly (S)--convergent}.
\]

We immediately get that (S)--convergence accommodates strong convergence as the case may be. 

\end{Remark}

\begin{Corollary}[Strongly convergent perturbation] \label{QC1}
\  

\begin{itemize}

\item 
Suppose that 
\[
\vc{U}_n \to \vc{U} \ \mbox{in}\ L^1(Q). 
\]

Then 
\[
\{ \vc{U}_n \}_{n=1}^\infty \ \mbox{is strongly (S)--convergent, and}\ 
\vc{U}_n \toS \delta_{\vc{U}}.
\]

\item Suppose that $\{ \vc{U}_n \}_{n=1}^\infty$ is weakly/strongly (S)--convergent, and 
\[
\vc{V}_n \to 0 \ \mbox{in}\ L^1(Q). 
\]

Then $\{ \vc{U}_n + \vc{V}_n \}_{n=1}^\infty$ is weakly/strongly (S)--convergent, and 
(S)--converge to the same measure $\mathcal{V}$.

\end{itemize}

\end{Corollary}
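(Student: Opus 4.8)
The plan is to deduce both assertions from the stability result already established in Theorem~\ref{QT1}, together with Remark~\ref{RR1} for the strong case, by exhibiting in each situation a suitable \emph{statistically equivalent} sequence in the sense of Definition~\ref{DQ1}. The only genuine computation is to check that $L^1(Q)$--convergence of the relevant differences forces statistical equivalence; once this is in place, the preservation of the (S)--limit is inherited verbatim from the perturbation theorem, and no subsequence is ever extracted.

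For the first item, I would compare $\{\vc{U}_n\}_{n=1}^\infty$ with the constant sequence $\vc{W}_n \equiv \vc{U}$. This constant sequence is strongly (S)--convergent to $\delta_{\vc{U}}$ by a trivial direct verification of the criterion \eqref{Su4} from Theorem~\ref{SL2}: for every $b \in C_c(R^D)$ and every $w \in W$ one has, exactly,
\[
\frac{1}{w_N} \sum_{n=1}^N w\left( \frac{n}{N} \right) b(\vc{W}_n) = b(\vc{U}) \, \frac{1}{w_N}\sum_{n=1}^N w\left(\frac{n}{N}\right) = b(\vc{U}),
\]
since $w_N = \sum_{n=1}^N w(n/N)$ by definition. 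Thus $\Ov{b(\vc{W})} = b(\vc{U})$ independently of $w$, which identifies the (S)--limit as $\delta_{\vc{U}}$. It then suffices to establish $\{\vc{U}_n\} \eqS \{\vc{W}_n\}$: because $\|\vc{U}_n - \vc{U}\|_{L^1(Q)} \to 0$, for each fixed $\ep > 0$ only finitely many indices $n$ can satisfy $\int_Q |\vc{U}_n - \vc{U}| \, \dy > \ep$, so the counting numerator in \eqref{Q1} stays bounded while $N \to \infty$ and the fraction tends to zero. Theorem~\ref{QT1} and Remark~\ref{RR1} then give $\vc{U}_n \toS \delta_{\vc{U}}$ with strong (S)--convergence.

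For the second item, I would compare $\{\vc{U}_n + \vc{V}_n\}_{n=1}^\infty$ directly with $\{\vc{U}_n\}_{n=1}^\infty$. The $L^1(Q)$--distance between corresponding terms is exactly $\|\vc{V}_n\|_{L^1(Q)} \to 0$, so the same counting argument shows $\{\vc{U}_n\} \eqS \{\vc{U}_n + \vc{V}_n\}$. Applying Theorem~\ref{QT1} in the weakly (S)--convergent case, and Remark~\ref{RR1} in the strongly (S)--convergent case, preserves the mode of convergence and yields $\vc{U}_n + \vc{V}_n \toS \mathcal{V}$ with the \emph{same} limit measure $\mathcal{V}$.

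The argument is entirely modular, and the single point requiring care — the mild obstacle — is the passage from norm convergence to the density statement \eqref{Q1}. It rests on the elementary but essential observation that $L^1$--convergence controls the \emph{number}, not merely the magnitude, of exceptional indices lying above any fixed threshold $\ep$: convergence of $\|\vc{U}_n - \vc{U}\|_{L^1(Q)}$ (respectively $\|\vc{V}_n\|_{L^1(Q)}$) to $0$ keeps the numerator in \eqref{Q1} bounded while the denominator $N$ diverges. No integrability beyond $L^1(Q)$ is needed, which is precisely why (S)--convergence accommodates strong convergence so cleanly.
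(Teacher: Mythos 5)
Your proof is correct and takes essentially the same route the paper intends: the corollary is placed immediately after Remark~\ref{RR1} precisely so that it follows from Theorem~\ref{QT1}, with $L^1(Q)$--convergence forcing statistical equivalence in the sense of \eqref{Q1} (only finitely many indices can exceed any fixed threshold $\ep$) between $\{\vc{U}_n\}_{n=1}^\infty$ and the constant sequence $\vc{W}_n \equiv \vc{U}$, respectively between $\{\vc{U}_n + \vc{V}_n\}_{n=1}^\infty$ and $\{\vc{U}_n\}_{n=1}^\infty$. Your explicit check that the constant sequence is strongly (S)--convergent with limit $\delta_{\vc{U}}$, via the criterion \eqref{Su4} of Theorem~\ref{SL2}, correctly supplies the one detail the paper leaves implicit.
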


\section{Asymptotically stationary approximations}
\label{A}

There is an important class of (S)--convergent sequences called stationary in stochastic terminology. 

\begin{Definition}[Stationary sequence] \label{AD1}

A sequence $\{ \vc{U}_n \}_{n=1}^\infty,\ \vc{U}_n : Q \mapsto R^D$, is called \emph{stationary} if 
\begin{equation} \label{A1}
\int_Q B(\vc{U}_{k_1}, \dots, \vc{U}_{k_j} ) \ \D y = 
\int_Q B (\vc{U}_{k_1+n }, \dots, \vc{U}_{k_j + n} ) \ \D y
\end{equation}
for any $B \in C_c(R^{jD})$, any $1 \leq k_1 \leq k_2 \dots \leq k_j$, $j \geq 1$, $n \geq 0$.

\end{Definition}

It follows from the celebrated Birkhoff--Khinchin ergodic theorem \cite[Chapter 4, Section 6, Theorem 11]{Krylovstoch}
that any stationary sequence admits the limit 
\[
\frac{1}{N} \sum_{n=1}^N b(\vc{U}_n) (y) \to \Ov{b(\vc{U})}(y) \ \mbox{for a.a.} \ y \in Q 
\]
for any bounded Borel function $b : R^{D} \to R$. In particular, we get \eqref{S4}; whence any stationary sequence and all its statistical perturbations in the sense of Definition \ref{DQ1} are weakly (S)--convergent.

\subsection{Weak asymptotic stationarity} 

In practice, we do not expect the approximate sequences to be stationary, however, some kind of \emph{asymptotic} stationarity can be anticipated. 

\begin{Definition}[Weak asymptotic stationarity] \label{AD2}
A sequence $\{ \vc{U}_n \}_{n=1}^\infty,\ \vc{U}_n : Q \mapsto R^D$, is called \emph{weakly asymptotically stationary} if the following holds
for any $b \in C_c(R^D)$: 
\begin{itemize}
\item {\bf Correlation limit} 
\begin{equation} \label{A2}
\lim_{N \to \infty} \frac{1}{N} \sum_{n=1}^N \int_Q b(\vc{U}_n) b(\vc{U}_m) \ \D y \ \mbox{exists}
\end{equation}
for any fixed $m$; 
\item {\bf Asymptotic correlation stationarity}
\begin{equation} \label{A3}
\left| 
\int_Q \Big[ b(\vc{U}_{k_1}) b(\vc{U}_{k_2} ) - b(\vc{U}_{k_1 + n}) b(\vc{U}_{k_2+n} ) \Big] \D y \right| \leq \omega (b,k)
\end{equation}
for any $1 \leq k \leq k_1 \leq k_2$, and any $n \geq 0$, where 
\[
\omega (b,k) \to 0 \ \mbox{as}\ k \to \infty.
\]

\end{itemize}
\end{Definition}

Obviously, any stationary sequence is weakly asymptotically stationary. Definition \ref{AD2} is reminiscent of \emph{weak stationarity} 
in the stochastic sense, where expectations, second moments, and correlations are required to be stationary. Note however, 
that we require \eqref{A3} to hold for \emph{any} $b \in C_c(R^D)$. 

The next result shows that weakly asymptotically stationary sequences are weakly (S)--convergent.

\begin{Theorem} \label{AP1}

Let $\{ \vc{U}_n \}_{n=1}^\infty$ be a weakly asymptotically stationary sequence in the sense of Definition \ref{AD2}. 

Then $\{ \vc{U}_n \}_{n=1}^\infty$ is weakly (S)--convergent, in particular, 
\[
\frac{1}{N} \sum_{n=1}^N b(\vc{U}_n) \to \Ov{b (\vc{U})} \ \mbox{as}\ N \to \infty \ \mbox{in}\ L^1(Q) 
\]
for any $b \in C_c(R^D)$.

\end{Theorem}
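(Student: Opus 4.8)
The plan is to verify directly the two defining conditions of weak (S)--convergence from Definition \ref{SD1}(i) for an arbitrary fixed $b \in C_c(R^D)$, and then to read off the $L^1$ statement from the Equivalence principle, Theorem \ref{SL2}(i). The first observation is that the weak correlation limit \eqref{S1} is literally the correlation limit \eqref{A2}, so it holds by hypothesis. Writing
\[
c(n,m) = \int_Q b(\vc{U}_n) b(\vc{U}_m) \ \D y, \qquad \gamma(m) = \lim_{N\to\infty}\frac{1}{N}\sum_{n=1}^N c(n,m),
\]
the entire task reduces to the disintegration identity \eqref{S2}, i.e. to proving that $\lim_N \frac{1}{N^2}\sum_{n,m=1}^N c(n,m)$ and $\lim_M \frac{1}{M}\sum_{m=1}^M \gamma(m)$ exist and coincide. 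Note that $c$ is symmetric and bounded, $|c(n,m)| \aleq 1$, since $b$ is bounded and $|Q| < \infty$.

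The key object is the \emph{autocorrelation} $r(d) = \lim_{l\to\infty} c(l,l+d)$, $d \ge 0$, and this is exactly where \eqref{A3} enters. For $l \le l'$, applying \eqref{A3} to the pair $(l,\,l+d)$ with shift $l'-l$ and lower bound $k=l$ gives $|c(l,l+d)-c(l',l'+d)| \le \omega(b,l)$, so $\{c(l,l+d)\}_l$ is Cauchy and $r(d)$ exists. Setting $\rho(d)=r(|d|)$ and using symmetry of $c$, the same estimate upgrades to the quantitative shift--invariance
\[
|c(n,m)-\rho(n-m)| \le \omega^*(b,k), \qquad n,m \ge k, \quad \omega^*(b,k):=\sup_{j\ge k}\omega(b,j),
\]
with $\omega^*(b,k)\to 0$ as $k\to\infty$. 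Thus, up to an error that is uniform once both indices exceed $k$, the correlation depends only on the difference $n-m$.

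With this surrogate in hand I would identify both sides of \eqref{S2} with the Cesàro mean $\bar r = \lim_{D}\frac{1}{D}\sum_{d=0}^{D-1}\rho(d)$. For the right--hand side, fix $m$, discard the finitely many terms with $n<m$, and replace $c(n,m)$ by $\rho(n-m)$; since a constant shift of the summation window is immaterial for the bounded sequence $\rho$, the $\limsup$ and $\liminf$ of the Cesàro averages of $\rho$ are each within $\omega^*(b,m)$ of $\gamma(m)$, hence differ by at most $2\omega^*(b,m)$. As this fixed gap is bounded by $2\omega^*(b,m)$ for \emph{every} $m$, letting $m\to\infty$ forces it to vanish: $\bar r$ exists, and simultaneously $\gamma(m)\to\bar r$, so the Cesàro mean on the right of \eqref{S2} equals $\bar r$. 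For the left--hand side, after discarding the $O(k/N)$ contribution of pairs with $\min(n,m)<k$ and replacing $c(n,m)$ by $\rho(n-m)$ at cost $\omega^*(b,k)$, one is left with the symmetric Fejér sum
\[
\frac{1}{N^2}\sum_{n,m=1}^N\rho(n-m)=\sum_{|d|<N}\frac{N-|d|}{N^2}\,\rho(d),
\]
which converges to $\bar r$ by the elementary equivalence of Fejér (triangular) and Cesàro summation. Letting $k\to\infty$ removes all errors and yields $\lim_N \frac{1}{N^2}\sum_{n,m=1}^N c(n,m)=\bar r$, establishing \eqref{S2}. Weak (S)--convergence follows, and the strong $L^1$ convergence of $\frac{1}{N}\sum_{n=1}^N b(\vc{U}_n)$ is then immediate from Theorem \ref{SL2}(i).

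The main obstacle is the passage in the second and third paragraphs: extracting from the merely \emph{asymptotic} stationarity \eqref{A3} a genuinely shift--invariant surrogate $\rho(n-m)$ for the correlations. The subtlety is that \eqref{A3} controls shifts only with an error $\omega(b,k)$ that is uniform solely for indices bounded below by $k$; consequently every averaging step must be organized so that the finitely many small--index terms are peeled off first (they wash out in the Cesàro limit because $c$ is bounded) and the threshold $k$ is sent to infinity only at the very end. The existence of $\bar r$ is itself not assumed but must be wrung out of this $\limsup$--$\liminf$ squeeze. Once the bookkeeping is arranged in this order, the residual convergence is the standard Fejér--versus--Cesàro fact and presents no further difficulty.
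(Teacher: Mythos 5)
Your proof is correct, and it is organized differently from the paper's. The paper never constructs your limit autocorrelation $r(d)=\lim_{l\to\infty}\int_Q b(\vc{U}_l)b(\vc{U}_{l+d})\,\D y$; instead it anchors at a finite index $k$, shifts the summation window, and regroups the double sum so as to compare $b(\vc{U}_{k+n})b(\vc{U}_{k+m})$ with $b(\vc{U}_k)b(\vc{U}_{k+|n-m|})$ term by term, which via the weak-(*) convergence $\frac{1}{N}\sum_n b(\vc{U}_n)\to\Ov{b(\vc{U})}$ from Lemma \ref{SL1} yields the squeeze
\[
\int_Q b(\vc{U}_k)\,\Ov{b(\vc{U})}\,\D y-\omega(k)\ \leq\ \liminf\ \leq\ \limsup\ \leq\ \int_Q b(\vc{U}_k)\,\Ov{b(\vc{U})}\,\D y+\omega(k);
\]
it then averages this inequality over $k=1,\dots,M$ and applies the weak-(*) convergence a second time to identify both sides of \eqref{S2} with $\int_Q|\Ov{b(\vc{U})}|^2\,\D y$. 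You work instead entirely at the level of the numerical array $c(n,m)$: you first wring from \eqref{A3} the existence of the genuine limit $r(d)$ (Cauchy in the base index, after monotonizing $\omega$ to $\omega^*$), obtain the approximate Toeplitz structure $|c(n,m)-\rho(n-m)|\leq\omega^*(b,k)$ for $n,m\geq k$, and identify both sides of \eqref{S2} with the Ces\`aro mean $\bar r$ of $\rho$ -- whose existence you extract from the $\limsup$--$\liminf$ gap being at most $2\omega^*(b,m)$ for every $m$ -- finishing with the elementary Fej\'er-versus-Ces\`aro equivalence for the triangular sum $\sum_{|d|<N}\frac{N-|d|}{N^2}\rho(d)$. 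Both arguments share the essential discipline of peeling off small-index terms and sending the threshold to infinity last, but your route is more elementary and self-contained: it avoids the second invocation of the weak-(*) limit inside the disintegration step (Lemma \ref{SL1} enters only through Theorem \ref{SL2} at the very end), and it delivers two byproducts the paper obtains only in averaged form, namely the existence of the limit autocorrelation $r(d)$ and the convergence $\gamma(m)\to\bar r$ of the individual correlation limits; the paper's route, in exchange, directly produces the identification of the common value as $\int_Q|\Ov{b(\vc{U})}|^2\,\D y$, which is the $L^2$-norm convergence actually used in the equivalence principle.
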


\begin{proof}

It is enough to show that $\{ \vc{U}_n \}_{n=1}^\infty$ enjoys the property \eqref{S2}. We know from Lemma \ref{SL1} that 
\begin{equation} \label{A7}
\frac{1}{N} \sum_{n=1}^N b(\vc{U}_n) \to \Ov{b(\vc{U})} \ \mbox{as}\ N \to \infty \ \mbox{weakly-(*) in}\ L^\infty(Q); 
\end{equation}
whence \eqref{S2} reduces to showing 
\begin{equation} \label{A4}
\lim_{N \to \infty} \sum_{n,m = 1}^N \frac{1}{N^2} \int_Q b(\vc{U}_n) b(\vc{U}_m) \ \dy = 
\int_Q |\Ov{b(\vc{U})} |^2 \ \D y.
\end{equation}

Fixing $k > 0$ we first observe that 
\begin{equation} \label{A5}
\lim_{N \to \infty} \sum_{n,m = 1}^N \frac{1}{N^2} \int_Q b(\vc{U}_n) b(\vc{U}_m) \ \dy = 
\lim_{N \to \infty} \sum_{n,m = k}^{N + k} \frac{1}{N^2} \int_Q b(\vc{U}_n) b(\vc{U}_m) \ \dy. 
\end{equation}
Next, regrouping terms, we get 
\begin{equation} \label{A6}
\begin{split}
\sum_{n,m = k}^{N + k} b(\vc{U}_n) b(\vc{U}_m) &= 
\sum_{n = 0}^N \left( \sum_{m = 0}^N b(\vc{U}_{k + n}) b(\vc{U}_{k+m}) \right) \\ &= 
\sum_{n= 0}^N b(\vc{U}_k) \sum_{m = 0}^N b(\vc{U}_{k+m}) + 
\sum_{n = 0}^N  \sum_{m = 0}^N \Big( b(\vc{U}_{k + n}) b(\vc{U}_{k+m}) - b(\vc{U}_k) b(\vc{U}_{k+m}) \Big).
\end{split}
\end{equation}
Furthermore, 
\[
\begin{split}
\sum_{m=0}^N b(\vc{U}_{k + n}) b(\vc{U}_{k+m}) &= \sum_{m \leq n} b(\vc{U}_{k + n}) b(\vc{U}_{k+m}) + 
\sum_{m = n+1}^N b(\vc{U}_{k + n}) b(\vc{U}_{k+m}) \\ 
&= 
\sum_{m \leq n} \Big[  b(\vc{U}_{k + n}) b(\vc{U}_{k+m}) - b(\vc{U}_k) b(\vc{U}_{k + n-m}) \Big] + 
\sum_{m=0}^n b(\vc{U}_k) b(\vc{U}_{k + m})  \\ &+ 
\sum_{m = n+1}^N \Big[  b(\vc{U}_{k + n}) b(\vc{U}_{k+m}) - b(\vc{U}_k) b(\vc{U}_{k + m - n}) \Big]
+ \sum_{m= n+1}^N b(\vc{U}_k) b(\vc{U}_{k + m})\\ &=
\sum_{m \leq n} \Big[  b(\vc{U}_{k + n}) b(\vc{U}_{k+m}) - b(\vc{U}_k) b(\vc{U}_{k + n-m}) \Big] \\ 
&+ \sum_{m = n+1}^N \Big[  b(\vc{U}_{k + n}) b(\vc{U}_{k+m}) - b(\vc{U}_k) b(\vc{U}_{k + m - n}) \Big] 
+ \sum_{m=0}^N b(\vc{U}_k) b(\vc{U}_{k + m}). 
\end{split}
\]
Going back to \eqref{A6} we obtain 
\begin{equation} \label{A8}
\begin{split}
\sum_{n,m = k}^{N + k} b(\vc{U}_n) b(\vc{U}_m) &= 
(N + 1) b(\vc{U}_k) \sum_{m = 0}^N b(\vc{U}_{k+m})\\& + 
\sum_{n = 0}^N \sum_{m \leq n} \Big[  b(\vc{U}_{k + n}) b(\vc{U}_{k+m}) - b(\vc{U}_k) b(\vc{U}_{k + n-m}) \Big] \\
&+ \sum_{n = 0}^N \sum_{m = n+1}^N \Big[  b(\vc{U}_{k + n}) b(\vc{U}_{k+m}) - b(\vc{U}_k) b(\vc{U}_{k + m - n}) \Big]
\end{split}
\end{equation}

As $\{ \vc{U}_n \}_{n=1}^\infty$ is asymptotically stationary, it follows from \eqref{A3} that 
\[
\begin{split}
\left|
\int_Q \Big[  b(\vc{U}_{k + n}) b(\vc{U}_{k+m}) - b(\vc{U}_k) b(\vc{U}_{k + n-m}) \Big] \D y \right| \leq \omega(k),\ 
n \geq m \geq 0 \\ \left|
\int_Q
\Big[  b(\vc{U}_{k + n}) b(\vc{U}_{k+m}) - b(\vc{U}_k) b(\vc{U}_{k + m - n}) \Big] \D y \right| \leq \omega(k)\ 
m \geq n \geq 0
\end{split}
\]

Thus performing the limit in \eqref{A5} we obtain 
\begin{equation} \label{A9}
\begin{split}
\int_Q b(\vc{U}_k) \Ov{b(\vc{U})} \D y - \omega(k) &\leq 
\liminf_{N \to \infty} \sum_{n,m = 1}^N \frac{1}{N^2} \int_Q b(\vc{U}_n) b(\vc{U}_m) \ \dy\\ &\leq
\limsup_{N \to \infty} \sum_{n,m = 1}^N \frac{1}{N^2} \int_Q b(\vc{U}_n) b(\vc{U}_m) \ \dy \leq
\int_Q b(\vc{U}_k) \Ov{b(\vc{U})} \D y + \omega(k)
\end{split}
\end{equation}
where we have used the weak convergence stated in \eqref{A8}.

Finally, summing \eqref{A9} with respect to $k$, we get  
\[
\begin{split}
\frac{1}{M} \sum_{k=1}^M
&\int_Q b(\vc{U}_k) \Ov{b(\vc{U})} \D y - \frac{1}{M} \sum_{k=1}^M \omega(k) \leq 
\liminf_{N \to \infty} \sum_{n,m = 1}^N \frac{1}{N^2} \int_Q b(\vc{U}_n) b(\vc{U}_m) \ \dy\\ &\leq
\limsup_{N \to \infty} \sum_{n,m = 1}^N \frac{1}{N^2} \int_Q b(\vc{U}_n) b(\vc{U}_m) \ \dy \leq
\frac{1}{M} \sum_{k=1}^M\int_Q b(\vc{U}_k) \Ov{b(\vc{U})} \D y + \frac{1}{M} \sum_{k=1}^M\omega(k)
\end{split}
\]
Thus letting $M \to \infty$ and using \eqref{A7} once more, we get \eqref{S2}.

\end{proof}

\begin{Remark} \label{AR1}

As revealed in the proof of Proposition \ref{AP1}, the hypothesis \eqref{A3} can be replaced by a weaker stipulation:

\begin{equation} \label{A12} 
\limsup_{N \to \infty} \frac{1}{N^2} \sum_{n,m = 0}^{N} \left| \int_Q b(\vc{U}_{k + n}) b(\vc{U}_{k+m}) - b(\vc{U}_k) 
b (\vc{U}_{ k + |n-m| }) \D y \right| \leq \omega(b,k),
\end{equation} 
$\omega(b,k) \to 0$ as $k \to \infty$.

\end{Remark}

\subsection{Strong asymptotic stationarity} 

Our final goal in this section is to show a sufficient condition for strong (S)--convergence. 

\begin{Definition}[Strong asymptotic stationarity] \label{SuD2}
A sequence $\{ \vc{U}_n \}_{n=1}^\infty,\ \vc{U}_n : Q \mapsto R^D$, is called \emph{strongly asymptotically stationary} if the following holds
for any $b \in C_c(R^D)$: 
\begin{itemize}
\item {\bf Strong correlation limit} 
\begin{equation} \label{SuA2}
\lim_{n \to \infty} \int_Q b(\vc{U}_n) b(\vc{U}_m) \ \D y \ \mbox{exists}
\end{equation}
for any fixed $m$; 
\item {\bf Asymptotic correlation stationarity}
\begin{equation} \label{SuA3}
\left| 
\int_Q \Big[ b(\vc{U}_{k_1}) b(\vc{U}_{k_2} ) - b(\vc{U}_{k_1 + n}) b(\vc{U}_{k_2+n} ) \Big] \D y \right| \leq \omega (b,k)
\end{equation}
for any $1 \leq k \leq k_1 \leq k_2$, and any $n \geq 0$, where 
\[
\omega (b,k) \to 0 \ \mbox{as}\ k \to \infty.
\]

\end{itemize}
\end{Definition}

We claim the following analogue of Theorem \ref{AP1}

\begin{Theorem} \label{SuP1}

Let $\{ \vc{U}_n \}_{n=1}^\infty$ be strongly asymptotically stationary in the sense of Definition \ref{SuD2}. 

Then $\{ \vc{U}_n \}_{n=1}^\infty$ is strongly (S)--convergent, in particular, 
\[
\frac{1}{w_N} \sum_{n=1}^N w \left( \frac{n}{N} \right)  b(\vc{U}_n) \to \Ov{b (\vc{U})} \ \mbox{in}\ L^1(Q) 
\]
for any $b \in C_c(R^D)$, $w \in {W}$.

\end{Theorem}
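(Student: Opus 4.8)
The plan is to establish, for each fixed $b \in C_c(R^D)$, the two requirements of strong (S)--convergence, namely the strong correlation limit \eqref{Su1} and the strong correlation disintegration \eqref{Su2}. Write $c_{n,m} := \int_Q b(\vc{U}_n) b(\vc{U}_m) \ \D y$, so that $c_{n,m} = c_{m,n}$ and $|c_{n,m}| \leq C$ for a constant $C$ depending only on $b$ and $|Q|$. Since the genuine limit hypothesis \eqref{SuA2} implies the Ces\`aro condition \eqref{A2}, and \eqref{SuA3} is identical with \eqref{A3}, the sequence is in particular weakly asymptotically stationary; hence Theorem \ref{AP1} applies to the uniform weight $w \equiv 1$ and supplies a weak--(*) limit $\Ov{b(\vc{U})}$ with $\frac{1}{N}\sum_{n=1}^N b(\vc{U}_n) \to \Ov{b(\vc{U})}$ in $L^1(Q)$. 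Moreover, the chain of estimates leading to \eqref{A9} sandwiches the ($w\equiv1$) double average within $\omega(k)$ of $L_k := \int_Q b(\vc{U}_k)\Ov{b(\vc{U})}\ \D y$, whence $L_k \to I := \int_Q |\Ov{b(\vc{U})}|^2 \ \D y$ as $k \to \infty$. The whole content beyond Theorem \ref{AP1} is therefore the passage from $w \equiv 1$ to an arbitrary weight $w \in W$.

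\emph{Strong correlation limit.} Fix $m$ and $w \in W$. By \eqref{SuA2} we have $c_{n,m} \to L_m$ as $n \to \infty$; a weighted Ces\`aro summation (split the sum at a fixed index $K_0$, whose weighted contribution is $O(K_0 / w_N) \to 0$, and control the tail by $\sup_{n > K_0} |c_{n,m} - L_m| \to 0$) then gives
\[
\frac{1}{w_N} \sum_{n=1}^N w\!\left( \frac{n}{N} \right) c_{n,m} \longrightarrow L_m \quad \mbox{as } N \to \infty,
\]
with a limit that does not involve $w$. This is exactly \eqref{Su1}, equivalently the hypothesis of Lemma \ref{SCL}. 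By Lemma \ref{SL1} the weighted averages $B_N^w := \frac{1}{w_N} \sum_{n=1}^N w(n/N)\, b(\vc{U}_n)$ converge weakly--(*) in $L^\infty(Q)$ to $\Ov{b(\vc{U})}$ independently of $w$, and the limit value is identified as $L_m = \int_Q \Ov{b(\vc{U})}\, b(\vc{U}_m) \ \D y$.

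\emph{Reduction of the disintegration.} On the right--hand side of \eqref{Su2} the inner limit is $L_m$ by the previous step, and since $\frac{1}{w_M} \sum_{m=1}^M w(m/M)\, b(\vc{U}_m) \to \Ov{b(\vc{U})}$ weakly--(*), testing against $\Ov{b(\vc{U})} \in L^1(Q)$ yields
\[
\lim_{M \to \infty} \frac{1}{w_M} \sum_{m=1}^M w\!\left( \frac{m}{M} \right) L_m = \int_Q \big| \Ov{b(\vc{U})} \big|^2 \ \D y = I .
\]
The left--hand side of \eqref{Su2} equals $\lim_{N\to\infty}\int_Q |B_N^w|^2 \ \D y = \lim_{N \to \infty} D_N^w$ with $D_N^w := \frac{1}{w_N^2} \sum_{n,m=1}^N w(n/N) w(m/N)\, c_{n,m}$. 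Thus, by Theorem \ref{SL2}, proving strong (S)--convergence reduces to the single assertion $D_N^w \to I$ for every $w \in W$.

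\emph{The weighted double sum.} This is the crux and the step I expect to be hardest: the weights destroy the shift (Toeplitz) structure exploited in the regrouping \eqref{A8}, so the proof of Theorem \ref{AP1} cannot be transcribed verbatim. Instead I would use the full strength of \eqref{SuA2}. From \eqref{SuA3}, for each fixed $j \geq 0$ the sequence $k \mapsto c_{k,k+j}$ has oscillation at most $\omega(k)$ and is therefore Cauchy, so $\gamma_j := \lim_{k \to \infty} c_{k,k+j}$ exists and $|c_{n,n+j} - \gamma_j| \leq \omega(n)$ uniformly in $j$; on the other hand \eqref{SuA2} gives $L_m = \lim_{j \to \infty} c_{m,m+j}$, and these two facts together with $L_m \to I$ force $\gamma_j \to I$ as $j \to \infty$. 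Consequently, given $\eta > 0$ one finds $K, J$ with $|c_{n,m} - I| \leq \eta$ whenever $\min(n,m) \geq K$ and $|n-m| \geq J$. Splitting $D_N^w$ into this bulk and the remainder $\{ \min(n,m) < K \} \cup \{ |n-m| < J \}$, the weighted mass of the remainder is $O\big( (K + J)/w_N \big) \to 0$ (since $w_N \sim N \int_0^1 w \to \infty$), while on the bulk $c_{n,m} = I + O(\eta)$ and the bulk mass tends to $1$. Hence $\limsup_{N \to \infty} |D_N^w - I| \leq \eta$, and letting $\eta \to 0$ gives $D_N^w \to I$. This verifies \eqref{Su2} and completes the proof.
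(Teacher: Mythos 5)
Your proof is correct and follows essentially the same route as the paper's: reduce via Lemma \ref{SL1} and Theorem \ref{SL2} to showing the weighted double sum $D_N^w \to \int_Q |\Ov{b(\vc{U})}|^2 \ \dy$, split off the small-index and near-diagonal parts (negligible since $w_N \sim N$), and use \eqref{SuA3} to compare the far-off-diagonal correlations with a fixed base index, identifying the limit through \eqref{SuA2} and the weak-(*) convergence of $b(\vc{U}_n)$. The only real difference is organizational: you first extract $\gamma_j = \lim_{k} c_{k,k+j}$ and prove the uniform two-parameter statement $c_{n,m} \to I$ as $\min(n,m) \to \infty$, $|n-m| \to \infty$, which decouples the cutoffs $K, J$, whereas the paper performs the comparison $c_{n,m} \approx c_{k, m-n+k}$ directly inside the weighted sum and chooses the diagonal width $l = l(\ep(b,k),k)$ nested in $k$ --- a slightly less transparent but equivalent bookkeeping.
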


\begin{proof}

In view of Lemma \ref{SL2}, it is enough to show that $\{ \vc{U}_n \}_{n=1}^\infty$ satisfies \eqref{Su4}. 
To begin, following the arguments of the proof of Lemma \ref{SL1} we observe that \eqref{SuA2} implies 
\begin{equation} \label{Su10}
b(\vc{U}_n) \to \Ov{b (\vc{U})} \ \mbox{weakly-(*) in}\ L^\infty(Q), 
\end{equation}
which yields 
\begin{equation} \label{Su7}
\frac{1}{w_N} \sum_{n=1}^N w \left( \frac{n}{N} \right) b(\vc{U}_n) \to \Ov{b(\vc{U})} \ \mbox{as}\ N \to \infty \ \mbox{weakly-(*) in}\ L^\infty(Q). 
\end{equation}
Consequently, it remains 
to show 
\begin{equation} \label{SuA4}
\lim_{N \to \infty} \sum_{n,m = 1}^N \frac{1}{w_N^2} w \left( \frac{n}{N} \right) w \left( \frac{m}{N} \right) 
\int_Q b(\vc{U}_n) b(\vc{U}_m) \ \dy = 
\int_Q |\Ov{b(\vc{U})} |^2 \ \D y.
\end{equation}

First, 
\begin{equation} \label{Su5A}
\begin{split}
\sum_{n,m = 1}^N \frac{1}{w_N^2} w &\left( \frac{n}{N} \right) w \left( \frac{m}{N} \right)  
\int_Q b(\vc{U}_n) b(\vc{U}_m) \ \dy - \int_Q |\Ov{b(\vc{U})} |^2 \ \D y\\ &= 
\sum_{n,m = 1}^N \frac{1}{w_N^2} w \left( \frac{n}{N} \right) w \left( \frac{m}{N} \right)  
\int_Q \Big[ b(\vc{U}_n) b(\vc{U}_m) -  |\Ov{b(\vc{U})} |^2 \Big] \D y \\&=
2 \sum_{n = 1}^{k-1} \sum_{m=1}^N \frac{1}{w_N^2} w \left( \frac{n}{N} \right) w \left( \frac{m}{N} \right)  
\int_Q \Big[ b(\vc{U}_n) b(\vc{U}_m) -  |\Ov{b(\vc{U})} |^2 \Big] \D y \\&+ 
\sum_{n,m = k}^{N}\frac{1}{w_N^2} w \left( \frac{n}{N} \right) w \left( \frac{m}{N} \right)  
\int_Q \Big[ b(\vc{U}_n) b(\vc{U}_m) -  |\Ov{b(\vc{U})} |^2 \Big] \D y
\end{split}
\end{equation}
Now observe that 
\begin{equation} \label{Su6A}
\frac{w_N}{N} = \frac{1}{N} \sum_{n=1}^N w \left( \frac{n}{N} \right) \to \int_0^1 w(z)\ \D z = 1 \ \mbox{as}\ N \to \infty,
\end{equation}
and, consequently, 
\[
\sum_{n = 1}^{k-1} \sum_{m=1}^N \frac{1}{w_N^2} w \left( \frac{n}{N} \right) w \left( \frac{m}{N} \right)  
\int_Q \Big[ b(\vc{U}_n) b(\vc{U}_m) -  |\Ov{b(\vc{U})} |^2 \Big] \D y \to 0 \ \mbox{as}\ N \to \infty 
\]
for any fixed $k$.

Going back to \eqref{Su5A}, we write the last integral as 
\[
\begin{split}
\sum_{n,m = k}^{N}\frac{1}{w_N^2} &w \left( \frac{n}{N} \right) w \left( \frac{m}{N} \right)  
\int_Q \Big[ b(\vc{U}_n) b(\vc{U}_m) -  |\Ov{b(\vc{U})} |^2 \Big] \D y\\ &= 
\sum_{n,m = k, |n-m| < l}^{N}\frac{1}{w_N^2} w \left( \frac{n}{N} \right) w \left( \frac{m}{N} \right)  
\int_Q \Big[ b(\vc{U}_n) b(\vc{U}_m) -  |\Ov{b(\vc{U})} |^2 \Big] \D y\\&+
\sum_{n,m = k, |n-m| \geq l}^{N}\frac{1}{w_N^2} w \left( \frac{n}{N} \right) w \left( \frac{m}{N} \right)  
\int_Q \Big[ b(\vc{U}_n) b(\vc{U}_m) -  |\Ov{b(\vc{U})} |^2 \Big] \D y
\end{split}
\]
Using \eqref{Su6A} once more, we get 
\[
\left| \sum_{n,m = k, |n-m| < l}^{N}\frac{1}{w_N^2} w \left( \frac{n}{N} \right) w \left( \frac{m}{N} \right)  
\int_Q \Big[ b(\vc{U}_n) b(\vc{U}_m) -  |\Ov{b(\vc{U})} |^2 \Big] \D y \right| \aleq \frac{lN}{w_N^2} \to 0 
\ \mbox{as}\ N \to \infty
\]
for any fixed $l$.

Finally,
\[ 
\begin{split}
\sum_{n,m = k, |n-m| \geq l}^{N}&\frac{1}{w_N^2} w \left( \frac{n}{N} \right) w \left( \frac{m}{N} \right)  
\int_Q \Big[ b(\vc{U}_n) b(\vc{U}_m) -  |\Ov{b(\vc{U})} |^2 \Big] \D y\\ &= 
2 \sum_{n,m = k, m \geq n + l}^{N}\frac{1}{w_N^2} w \left( \frac{n}{N} \right) w \left( \frac{m}{N} \right)  
\int_Q \Big[ b(\vc{U}_n) b(\vc{U}_m) -  |\Ov{b(\vc{U})} |^2 \Big] \D y\\ &= 
2 \sum_{n,m = k, m - n\geq l}^{N}\frac{1}{w_N^2} w \left( \frac{n}{N} \right) w \left( \frac{m}{N} \right)  
\int_Q \Big[ b(\vc{U}_k) b(\vc{U}_{m - n + k}) -  |\Ov{b(\vc{U})} |^2 \Big] \D y\\ &+ 
2 \sum_{n,m = k, m \geq n + l}^{N}\frac{1}{w_N^2} w \left( \frac{n}{N} \right) w \left( \frac{m}{N} \right)  
\int_Q \Big[ b(\vc{U}_n) b(\vc{U}_m) - b(\vc{U}_k) b(\vc{U}_{m - n + k})  \Big] \D y\\
\end{split}
\]
In view of hypothesis \eqref{SuA3}, 
\[
\left| \sum_{n,m = k, m \geq n + l}^{N}\frac{1}{w_N^2} w \left( \frac{n}{N} \right) w \left( \frac{m}{N} \right)  
\int_Q \Big[ b(\vc{U}_n) b(\vc{U}_m) - b(\vc{U}_k) b(\vc{U}_{m - n + k})  \Big] \D y \right| \leq \ep(b,k),
\]
where $\ep (b,k) \to 0$ as $k \to \infty$, uniformly for any $l$.
Now, in view of \eqref{Su10}, given $k$, we can fix $l = l(\ep (b,k) ,k)$ such that 
\[
\left| \sum_{n,m = k, m - n\geq l}^{N}\frac{1}{w_N^2} w \left( \frac{n}{N} \right) w \left( \frac{m}{N} \right)  
\int_Q \Big[ b(\vc{U}_k) b(\vc{U}_{m - n + k}) -  |\Ov{b(\vc{U})} |^2 \Big] \D y \right| < \ep (b,k).
\] 

\end{proof}

We strongly believe that asymptotic stationarity is satisfied by consistent approximations of the Euler system. For relevant numerical evidence, see \cite{FeiLukMizSheWa}.

\section{Applications, convergence of consistent approximations for the compressible Euler system}
\label{E} 

Our ultimate goal is to apply the abstract theory to the
\emph{isentropic Euler system}: 
\begin{equation}\label{E1} 
\begin{split}
\partial_t \vr + \Div \vm &= 0, \ \vr(0, \cdot) = \vr_0, \\ 
\partial_t \vm + \Div \left( \frac{\vm \otimes \vm}{\vr} \right) + \Grad p(\vr) &= 0, \ \vm(0, \cdot) = \vm_0,
\end{split}
\end{equation}
with the isentropic EOS 
$p(\vr) = a \vr^\gamma$, $\gamma > 1$. 
Here $\vr = \vr(t,x)$ is the mass density and $\vm = \vm(t,x)$ the linear momentum of a compressible gas in the isentropic regime.
For the sake of simplicity, we consider the periodic boundary conditions $\Omega = \Td$.
We recall the associated energy inequality in the integrated form
\[
\intTd{ E(\vr, \vm)(t, \cdot) } \leq \intTd{ E(\vr_0, \vm_0) },
\]
where 
\[
E(\vr, \vm) = \left\{ \begin{array}{l} \frac{1}{2} \frac{|\vm|^2}{\vr} + P(\vr) \ \mbox{if}\ \vr > 0,\\  
0 \ \mbox{if}\ \vr = 0, \ \vm = 0, \\  \infty \ \mbox{otherwise}\end{array} \right.
\]

\subsection{Dissipative solutions}
\label{D}

We recall the definition of \emph{dissipative solution} of the Euler system, see \cite{BreFeiHof19}: 
\begin{itemize} 
\item {\bf Equation of continuity}
\[
\partial_t \vr + \Div \vm = 0 \ \mbox{in}\ \mathcal{D}'((0,T) \times \Td);
\]
\item {\bf Momentum equation} 
\[
\partial_t \vm + \Div \left( 1_{\vr > 0} \frac{\vm \otimes \vm}{\vr} \right) + \Grad p(\vr) + \Div \mathfrak{R} = 0,\ 
\mbox{in}\ \mathcal{D}'((0,T) \times \Td; R^d), 
\]
with the Reynolds stress $\mathfrak{R} \in L^\infty(0,T; \mathcal{M}^+(\Td; R^{d \times d}_{\rm sym}))$;
\item {\bf Energy balance}
\[
\intTd{ E(\vr, \vm)(t, \cdot) } + \min\left\{ \frac{1}{2}; \frac{1}{\gamma} \right\} \int_{\Td} \D {\rm tr}[ 
\mathfrak{R}](t, \cdot) \leq \intTd{E(\vr_0, \vm_0)} 
\]
for a.a. $t \in (0,T)$.

\end{itemize}

It follows that 
\[
\vr \in C_{{\rm weak}}([0,T]; L^\gamma(\Td)),\ 
\vm \in C_{{\rm weak}}([0,T]; L^{\frac{2 \gamma}{\gamma + 1}}(\Td; R^d));
\]
whence the initial data are well defined. 

If $\mathfrak{R} \equiv 0$, the above definition yields the standard (admissible) weak solution. 
Note that the Reynolds stress $\mathfrak{R}$ is a positively definite matrix valued measure that accommodates possible oscillations/concentration inherited from the approximation process. Although definitely more general, the dissipative solutions share many 
important properties with the weak solutions, among which the weak--strong uniqueness principle. The reader may consult 
\cite{BreFeiHof19} or \cite{Fei2020} for other interesting properties of dissipative solutions. 

\subsection{Consistent approximation} 
\label{CA}

The class of dissipative solutions is large enough to accommodate limits of various approximate schemes that are \emph{consistent} with 
the Euler system, cf. \cite{MarEd} for the vanishing viscosity limit, and \cite{FeiLMMiz} for a finite volume numerical scheme. 

\begin{Definition}[Consistent approximation] \label{CAD1}

We say that a sequence $\{ \vr_n, \vm_n \}_{n=1}^\infty$ is \emph{consistent approximation} of the 
isentropic Euler system if the following holds:

\begin{itemize}
 
\item {\bf Approximate equation of continuity}
\[
\int_0^T \intTd{ \Big[ \vr_n \partial_t \varphi + \vm_n \cdot \Grad \varphi \Big] } \dt  =  
- \intTd{ \vr_{0,n} \varphi (0, \cdot) } + e^1_n [\varphi] 
\]
for any $\varphi \in C^\infty_c([0, T) \times \Td)$;

\item {\bf Approximate momentum equation} 
\[
\begin{split}
\int_0^T &\intTd{ 
\left[ \vm_n \cdot \partial_t \bfphi + \left( 1_{\vr_n > 0} \frac{\vm_n \otimes \vm_n}{\vr_n} \right) : 
\Grad \bfphi + p(\vr_n) \Div \bfphi + \mathfrak{R}_n : \Div \bfphi \right] } \dt \\ &= - 
\intTd{ \vm_{0,n} \cdot \bfphi(0, \cdot) } + e^2_n [\bfphi]
\end{split}
\]
for any $\bfphi \in C^\infty_c([0, T) \times \Td; R^d)$;
\item {\bf Approximate energy balance}
\[
\intTd{ E(\vr_n, \vm_n)(t, \cdot) } + \min\left\{ \frac{1}{2}; \frac{1}{\gamma} \right\} \int_{\Td} \D {\rm tr}[ 
\mathfrak{R}_n](t, \cdot) \leq E_n 
\]

\item {\bf Consistency} 
\[
\begin{split}
e^1_n[\varphi] &\to 0 \ \mbox{as}\ n \to \infty \ \mbox{for any}\ \varphi \in C^\infty_c([0, T) \times \Td),\\
e^2_n[\bfphi] &\to 0 \ \mbox{as}\ n \to \infty \ \mbox{for any}\ \bfphi \in C^\infty_c([0, T) \times \Td; R^d),\\ 
\vr_{0,n} &\to \vr_0 \ \mbox{weakly in}\ L^1(Q),\\ \vm_{0,n} &\to \vm_0 \ \mbox{weakly in}\ L^1(Q; R^d),\\
\limsup_{n \to \infty} E_n &\leq \intTd{ E(\vr_0, \vm_0) }. 
\end{split}
\]

\end{itemize}

\end{Definition}

Note that the above definition differs from \cite{MarEd}, \cite{FeiLMMiz} as it allows for ``approximate Reynolds stress'' 
$\mathfrak{R}_n$ that is set to be zero in \cite{MarEd}, \cite{FeiLMMiz}. Accordingly, the present definition accommodates a larger 
class of consistent approximations than \cite{MarEd}, \cite{FeiLMMiz}.
There are two crucial observations: 

\begin{enumerate}

\item Any weak limit of a sequence of consistent approximations $\{ \vr_n, \vm_n \}_{n = 1}^\infty$ is a dissipative solution with the initial data 
$(\vr_0, \vm_0)$. This follows from the general compactness results proved in \cite[Section 3, Proposition 3.1]{BreFeiHof19}. 

\item If $\{ \vr_n, \vm_n \}_{n =1}^\infty$ is a consistent approximation, then 
\[
\vr_N = \frac{1}{N} \sum_{n=1}^N \vr_n,\ \vm_N = \frac{1}{N}\sum_{n=1}^N \vm_n ,\ N = 1,2,\dots 
\]
is another consistent approximation (of the same problem, with the same data). This reflects a general principle that a convex combination of dissipative solutions is a dissipative solution. It remains to observe that  the error terms satisfy 
\[
\frac{1}{N} \sum_{n=1}^N e^1_n [\varphi],\ 
\frac{1}{N} \sum_{n=1}^N e^2_n [\bfphi] \to 0 \ \mbox{as}\ N \to \infty.
\]

\end{enumerate}

\begin{Corollary}[Convergence of (S)--convergent consistent approximations]  \label{DC1}

Suppose that a sequence $\{ \vr_n, \vm_n \}_{n =1}^\infty$ is a consistent approximations of the Euler system that is 
weakly (S)--convergent  
in the sense of Definition \ref{SD1}. 

Then 
\begin{itemize}
\item 
\[
\begin{split}
\frac{1}{N} \sum_{n=1}^\infty \vr_n &\to \vr \ \mbox{in}\ L^q((0,T) \times \Td),\ 1 \leq q < \gamma, \\ 
\frac{1}{N} \sum_{n=1}^\infty \vm_n &\to \vm \ \mbox{in}\ L^q((0,T) \times \Td),\ 1 \leq q < \frac{2 \gamma}{\gamma + 1},  
\end{split}
\]
where $(\vr, \vm)$ is a dissipative solution of the Euler system;  
\item there exists a (unique) parametrized measure $( \mathcal{V}_{t,x} )_{(t,x) \in (0,T) \times \Td)}$, 
$\mathcal{V}_{t,x} \in \mathcal{P}(R^{d+1})$,
such that 
\[
(\vr_n, \vm_n) \toS \mathcal{V},
\]
and
\[
\int_0^T \intTd{
d_{W_s} \left[ 
\frac{1}{N} \sum_{n=1}^N \delta_{\vr(t,x), \vm(t,x)};  \mathcal{V}_{t,x} \right]^s } \dt \to 0 
\ \mbox{for any}\ 1 \leq s < \frac{2 \gamma}{\gamma + 1};  
\] 

\item If, in addition, the sequence $\{ \vr_n, \vm_n \}_{n =1}^\infty$ is strongly (S)--convergent, the 
\[
\frac{1}{N} \sum_{n=1}^N \ \mbox{may be replaced by}\ \frac{1}{w_N} \sum_{n=1}^N w \left( \frac{n}{N} \right) 
\]
for arbitrary weight $w \in W$.

\end{itemize}

\end{Corollary}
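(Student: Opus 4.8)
The plan is to combine the abstract (S)-convergence theory of Sections~\ref{S}--\ref{A} with the two structural observations preceding the statement: that Cesàro (more generally, convex) averages of consistent approximations are again consistent approximations, and that weak limits of consistent approximations are dissipative solutions. First I would record the uniform bounds furnished by the approximate energy balance. Since $\limsup_{n} E_n \leq \intTd{ E(\vr_0, \vm_0) } < \infty$ and $E(\vr, \vm) = \frac{1}{2} |\vm|^2 / \vr + P(\vr)$ with $P(\vr) \approx \vr^\gamma$, one obtains $\vr_n$ bounded in $L^\infty(0,T; L^\gamma(\Td))$ and, writing $|\vm_n|^{2\gamma/(\gamma+1)} = (|\vm_n|^2/\vr_n)^{\gamma/(\gamma+1)} \vr_n^{\gamma/(\gamma+1)}$ and applying Hölder's inequality, $\vm_n$ bounded in $L^\infty(0,T; L^{2\gamma/(\gamma+1)}(\Td))$. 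As $Q = (0,T) \times \Td$ has finite measure, these give uniform $L^p(Q)$ bounds with $p = \gamma > 1$ for the density and $p = 2\gamma/(\gamma+1) > 1$ for the momentum.

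Next I would apply Proposition~\ref{QP1} to the weakly (S)-convergent sequence $\vc{U}_n = (\vr_n, \vm_n) : Q \to R^{d+1}$. This yields at once that the (S)-limit $\mathcal{V}_{t,x}$ is a probability measure on $R^{d+1}$ for a.a.\ $(t,x)$ with finite moments, its uniqueness coming from the uniqueness of the weak-$(*)$ limits in Lemma~\ref{SL1}; that the Wasserstein-$s$ convergence of the empirical averages holds for $1 \leq s < 2\gamma/(\gamma+1)$; and that the barycenters converge strongly. Using the integrability of each component separately ($p = \gamma$ for $\vr$, $p = 2\gamma/(\gamma+1)$ for $\vm$), the barycenter convergence reads $\frac{1}{N} \sum_{n=1}^N \vr_n \to \vr$ in $L^q(Q)$ for $q < \gamma$ and $\frac{1}{N} \sum_{n=1}^N \vm_n \to \vm$ in $L^q(Q)$ for $q < 2\gamma/(\gamma+1)$, where $(\vr, \vm) = \left< \mathcal{V}; \widetilde{\vc{U}} \right>$.

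The crucial step is to identify this barycenter as a genuine dissipative solution. By Observation~2 the Cesàro-averaged sequence $\{ (\vr_N, \vm_N) \}_{N=1}^\infty$, with $\vr_N = \frac{1}{N}\sum_{n=1}^N \vr_n$ and $\vm_N = \frac{1}{N}\sum_{n=1}^N \vm_n$, is again a consistent approximation with the same data (its averaged Reynolds stress and vanishing averaged consistency errors $\frac{1}{N}\sum_{n=1}^N e^i_n$ being part of that statement). By Observation~1, any weak limit of a consistent approximation is a dissipative solution, via the compactness result of \cite{BreFeiHof19}. Since the barycenters converge strongly, hence weakly, to $(\vr, \vm)$, the unique limit $(\vr, \vm)$ is a dissipative solution. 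I expect this identification to be the main obstacle, as it is precisely where the purely statistical content of (S)-convergence must be reconciled with the PDE structure; the reconciliation hinges on the closedness of consistent approximations under convex averaging — convexity of the admissibility and energy conditions together with the vanishing of the averaged consistency errors — and on the compactness theory of \cite{BreFeiHof19}.

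Finally, in the strongly (S)-convergent case I would replace the Cesàro averages by the weighted averages $\frac{1}{w_N}\sum_{n=1}^N w(n/N)\,\vr_n$ and $\frac{1}{w_N}\sum_{n=1}^N w(n/N)\,\vm_n$, $w \in W$, invoking Theorem~\ref{SL2}(ii) together with Remark~\ref{RRS11}. Because $w \geq 0$ and $\frac{1}{w_N}\sum_{n=1}^N w(n/N) = 1$, each such average is again a convex combination, hence a consistent approximation; the weighted consistency errors vanish by a Toeplitz (Silverman--Toeplitz) summation argument, using $w_N/N \to 1$ and $\max_n w(n/N)/w_N \to 0$, and the energy bound is preserved. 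As the strong (S)-limit is independent of $w$, the weighted averages converge strongly to the same barycenter $(\vr, \vm)$, which is therefore the same dissipative solution.
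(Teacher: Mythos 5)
Your proposal is correct and follows essentially the same route as the paper, which states Corollary \ref{DC1} without a separate proof precisely because it is the assembly you describe: the energy bounds yielding uniform $L^\gamma$/$L^{2\gamma/(\gamma+1)}$ integrability, Proposition \ref{QP1} (with Remark \ref{RRS11} for the weighted case) for the Wasserstein and barycenter convergence, and the two observations of Section \ref{CA} — convex averages of consistent approximations are consistent approximations, and their weak limits are dissipative solutions by \cite{BreFeiHof19} — to identify the barycenter. Your Toeplitz argument for the weighted consistency errors is exactly the mechanism implicit in the paper's third bullet.
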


The parametrized measure $\mathcal{V}$ can be seen as a generalized solution of the Euler system. It shares all 
fundamental properties with the Young measures 
generated by (sub)sequences of consistent approximations. In particular, 
\[
\mathcal{V}_{(t,x)} = \delta_{[\vr, \vm](t,x)} \ \mbox{for a.a.}\ (t,x) \in (0,T) \times \Td 
\]
if either the limit Euler system admits a smooth solution, or the barycenter of $\mathcal{V}$ -- $[\vr, \vm]$ -- is 
of class $C^1$.

\subsection{(S)--convergence to weak solutions}
\label{W}

We know that weakly (S)--convergent approximate sequences to the isentropic Euler system generate the parametrized measure 
$\mathcal{V}$. Our ultimate goal is to discuss validity of the following statement:
\begin{equation} \label{W1}
\begin{split}
\vr = \left< \mathcal{V}; \tvr \right>,\ 
\vm = \left< \mathcal{V}; \tvm \right> \ &\mbox{is a weak solution to the Euler system} \\
& \Rightarrow \mathcal{V}_{t,x} = \delta_{(\vr, \vm)(t,x)} \ \mbox{for a.a.}\ (t,x) \in (0,T) \times \Td.
\end{split}
\end{equation}

First observe that if $(\vr, \vm)$ is a weak (distributional) solution of the Euler system, then the Reynolds defect tensor 
satisfies 
\[
\Div \mathfrak{R} = 0 \ \mbox{in}\ \mathcal{D}'((0,T) \times \Td; R^d)
\]
from which we easily deduce 
\begin{equation} \label{W2}
\Div \mathfrak{R} (t, \cdot) = 0 \ \mbox{in}\ \mathcal{D}'(\Td),\ \mathfrak{R} \in \mathcal{M}^+(\Td; R^{d \times d}_{\rm sym})
\ \mbox{for a.a.}\ t \in (0,T).
\end{equation}
Moreover, the limit Reynolds stress can be written as 
\[
\mathfrak{R} = \mathfrak{R}_1 + \mathfrak{R}_2, 
\]
where 
\[
\mathfrak{R}_1 = \lim_{k \to \infty} \frac{1}{N_k} \sum_{n=1}^{N_k} \mathfrak{R}_{n} \ \mbox{--weak-(*) in}\ 
L^\infty(0,T; \mathfrak{M}(\Td; R^{d \times d}_{\rm sym})),\ \mathfrak{R}_1(t, \cdot) \in \mathcal{M}^+(\Td; R^{d \times d}_{\rm sym}),
\] 
\[
\begin{split}
\mathfrak{R}_2 = \lim_{k \to \infty} \frac{1}{N_k} &\sum_{n=1}^{N_k}\left[  \left( 1_{\vr_n > 0} \left( \frac{\vm_n \otimes \vm_n}{\vr_n} 
\right) + p(\vr_n) \mathbb{I} \right) - \left( 1_{\vr > 0} \left( \frac{\vm \otimes \vm}{\vr} 
\right) + p(\vr) \mathbb{I} \right)  \right] \\ &\mbox{--weak-(*) in}\ 
L^\infty(0,T; \mathfrak{M}(\Td; R^{d \times d}_{\rm sym})),
\end{split}
\]
where 
\[
\mathfrak{R}_2(t, \cdot) \in \mathcal{M}^+(\Td; R^{d \times d}_{\rm sym})\ \mbox{for a.a}\ t \in (0,T).
\]

Thanks to the convexity argument, specified in \cite{MarEd}, implication \eqref{W1} follows as soon as we can show that $\mathfrak{R} = 0$
Moreover, as $\mathfrak{R}$ is positively semi--definite and satisfies \eqref{W2}, it is enough to show that 
$\mathfrak{R}$ vanishes in a neighborhood of the boundary of $\Omega = \Td$, see \cite[Section 4, Proposition 4.3]{MarEd}.
As $\mathfrak{R}_1$ usually vanishes for consistent 
approximation of the Euler system, we have to make sure that $\mathfrak{R}_2$ vanishes in a neighborhood of 
$\partial \Omega$. Note that here we identify $\Td$ with a bounded subset of $R^d$. Following step by step the arguments of 
\cite[Section 4]{MarEd}, we can show the following result.  

\begin{Theorem} \label{WT1}

Let $\{ \vr_n, \vm_n \}_{n=1}^\infty$ be a consistent approximation of the isentropic Euler system
in the sense of Definition \ref{CAD1}, with $\mathfrak{R}_n \equiv 0$. Moreover, suppose that
\[
(\vr_n, \vm_n) \toS \mathcal{V}, 
\]
and 
\begin{equation} \label{W3}
\frac{1}{N} \sum_{n=1}^N \int_0^T \int_{\mathcal{U}}  E(\vr_n, \vm_n) \ \dxdt \to \int_0^T \int_{\mathcal{U}} E(\vr, \vm) \ \dxdt, 
\ \vr = \left< \mathcal{V}; \tvr \right>, \ \vm = \left< \mathcal{V}; \tvm \right>,   
\end{equation}
where $\mathcal{U}$ is an open neighborhood of $\partial \Omega$, $\Omega = \Td$. 
Finally, suppose that $(\vr, \vm)$ is a weak solution of the Euler system. 

Then 
\[
\mathcal{V}_{(t,x)} = \delta_{(\vr, \vm)(t,x)}\ \mbox{for a.a.} \ (t,x) \in (0,T) \times \Td.
\]

\end{Theorem}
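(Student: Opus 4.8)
The plan is to reduce the claim to showing that the limit Reynolds stress $\mathfrak{R}$ vanishes, and then to exploit the energy hypothesis \eqref{W3} to obtain this vanishing near the boundary. As recorded in the discussion preceding the statement, once $(\vr, \vm)$ is a weak solution the convexity argument of \cite{MarEd} reduces the identity $\mathcal{V}_{(t,x)} = \delta_{(\vr,\vm)(t,x)}$ to proving $\mathfrak{R} = 0$; moreover, since $\mathfrak{R}$ is positively semi-definite and divergence-free in the sense of \eqref{W2}, by \cite[Section 4, Proposition 4.3]{MarEd} it suffices to show that $\mathfrak{R}$ vanishes on the neighbourhood $\mathcal{U}$ of $\partial\Omega$. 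With $\mathfrak{R}_n \equiv 0$ we have $\mathfrak{R}_1 = 0$, so the whole task collapses to proving $\mathfrak{R}_2 = 0$ on $(0,T) \times \mathcal{U}$.

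The core step is to identify the trace of $\mathfrak{R}_2$ with the energy defect. Writing $\overline{g}$ for the weak-$(*)$ limit of the Cesàro averages $\frac{1}{N}\sum_{n=1}^N g(\vr_n, \vm_n)$ (which exist along the full sequence by (S)-convergence together with the uniform energy bound, ensuring uniform integrability of the quantities involved, so that no subsequence is needed), I would take the trace in the definition of $\mathfrak{R}_2$ and use $p(\vr) = (\gamma - 1)P(\vr)$ to obtain
\[
\mathrm{tr}[\mathfrak{R}_2] = \left( \overline{\tfrac{|\vm|^2}{\vr}} - \tfrac{|\vm|^2}{\vr} \right) + d(\gamma - 1)\left( \overline{P(\vr)} - P(\vr) \right).
\]
Both brackets are non-negative: by Jensen's inequality applied to the convex maps $(\vr, \vm) \mapsto \frac{|\vm|^2}{\vr}$ and $\vr \mapsto P(\vr)$, together with the fact that $(\vr, \vm) = (\left< \mathcal{V}; \tvr \right>, \left< \mathcal{V}; \tvm \right>)$ is the barycenter of $\mathcal{V}$, each averaged quantity dominates its limit evaluated at the mean. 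The very same two brackets control the energy defect,
\[
\overline{E(\vr, \vm)} - E(\vr, \vm) = \tfrac{1}{2}\left( \overline{\tfrac{|\vm|^2}{\vr}} - \tfrac{|\vm|^2}{\vr} \right) + \left( \overline{P(\vr)} - P(\vr) \right) \geq 0.
\]
Hypothesis \eqref{W3} asserts that the integral of this defect over $(0,T) \times \mathcal{U}$ vanishes; as the integrand is non-negative, it vanishes for a.a.\ $(t,x) \in (0,T) \times \mathcal{U}$. Hence both brackets vanish there, and therefore $\mathrm{tr}[\mathfrak{R}_2] = 0$ on $(0,T) \times \mathcal{U}$.

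Finally, since $\mathfrak{R}_2(t, \cdot) \in \mathcal{M}^+(\Td; R^{d \times d}_{\rm sym})$ is positively semi-definite, its total variation is dominated by its trace, so zero trace forces $\mathfrak{R}_2 = 0$ on $(0,T) \times \mathcal{U}$, giving $\mathfrak{R} = 0$ there. Invoking the propagation result \cite[Proposition 4.3]{MarEd} then extends the vanishing to the whole torus, $\mathfrak{R} \equiv 0$, and the convexity argument of \cite{MarEd} delivers $\mathcal{V}_{(t,x)} = \delta_{(\vr,\vm)(t,x)}$ for a.a.\ $(t,x)$. The main obstacle is the propagation step: transferring the rigidity of divergence-free, positively semi-definite matrix-valued measures from $\mathcal{U}$ to the entire domain. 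While this is supplied by \cite[Section 4]{MarEd}, some care is required to verify that those arguments — originally formulated for weak limits along subsequences — apply verbatim to the Cesàro-averaged objects underlying the (S)-limit $\mathcal{V}$; in particular one must check that the decomposition $\mathfrak{R} = \mathfrak{R}_1 + \mathfrak{R}_2$ and the convexity inequalities survive passage to the arithmetic means rather than a single weak accumulation point.
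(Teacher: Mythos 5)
Your proposal is correct and takes essentially the same route as the paper: the paper's own proof consists exactly of the reduction you describe (the convexity argument reducing the Dirac property to $\mathfrak{R}=0$, Proposition 4.3 of \cite{MarEd} reducing this to vanishing of $\mathfrak{R}$ near $\partial\Omega$, and $\mathfrak{R}_1=0$ from $\mathfrak{R}_n\equiv 0$), with the remaining trace/energy-defect computation delegated to ``following step by step the arguments of \cite[Section 4]{MarEd}'' --- which is precisely the step you carry out explicitly. Your only overstatement, which you yourself flag at the end, is the parenthetical claim that the Ces\`{a}ro averages of the unbounded quantities converge weakly-(*) along the full sequence (the paper defines $\mathfrak{R}_1$, $\mathfrak{R}_2$ only along subsequences $N_k$); this is harmless, since \eqref{W3} and the (S)--limit $\mathcal{V}$ are full-sequence statements, so your nonnegative-defect argument applies to every weak-(*) accumulation point and the conclusion is unchanged.
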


We finish this part by a short discussion when hypothesis \eqref{W3} can be anticipated.
Suppose that 
\[
\vr_0 = \Ov{\vr} > 0, \ \vm = \Ov{\vm} \ \mbox{for}\ |x| \geq R.
\]
Supposing the finite--speed of propagation for the Euler system, we may infer that 
\[
E(\vr, \vm) = E(\Ov{\vr}, \Ov{\vm}) \ \mbox{for all}\ t \in (0,T),\ |x| \leq R + ct. 
\]
Consistently, we assume that 
\begin{equation} \label{W4}
E(\vr_n, \vm_n) \to E(\Ov{\vr}, \Ov{\vm}) \ \mbox{in}\ L^1 \left\{ t \in (0,T),\ |x| \leq R + ct \right\}
\end{equation}
In numerical approximations, \eqref{W4} is usually guaranteed by imposing a (CFL) condition.

Finally, we claim that hypothesis \eqref{W3} can be dropped in the case of complete Euler system, provided the reference variables 
are the density $\vr$, the momentum $\vm$, and the total entropy $S$. The interested reader may elaborate the details following 
\cite[Section 2]{MarEd}. With a bit of extrapolation, we may conclude that if the barycenter of the (S)--limit $\mathcal{V}$ is a weak solution of the Euler system, then $\mathcal{V}$ is a (parametrized) Dirac mass. 

\section{Conclusion}

The (S)--convergence provides a tool to study the limits of approximate sequences even in the case when the weak limit does not exist. This is in particular convenient for numerical schemes, where the procedure of picking up a suitable subsequence is 
practically not applicable. The notion is stable under very general statistical perturbations that may ``polute'' the approximation procedure. The limit measure $\mathcal{V}$ is attained in the strong topology of the underlying physical space and in the space of probability measures endowed with suitable Wasserstein distance. In particular, deviation, variance, barycenter and other 
parameters of the limit measure can be effectively computed. 

Given a sequence of consistent approximations, it is a hard problem to determine whether or not it is (S)--convergent. Note that it is possible to construct examples of consistent approximations that are not (S)--convergent at least for certain class of initial data. 
Given a consistent approximation resulting from the vanishing viscosity process or as a limit of a specific numerical scheme, a rigorous verification of validity of any form (weak or strong) of (S)--convergence remains an outstanding open problem.

\def\cprime{$'$} \def\ocirc#1{\ifmmode\setbox0=\hbox{$#1$}\dimen0=\ht0
  \advance\dimen0 by1pt\rlap{\hbox to\wd0{\hss\raise\dimen0
  \hbox{\hskip.2em$\scriptscriptstyle\circ$}\hss}}#1\else {\accent"17 #1}\fi}


\end{document}